\documentclass[11pt]{amsart}
\usepackage[margin= 1.3 in]{geometry}

\setlength{\parskip}{0.5em}
\synctex=1
\usepackage{pdfsync}

 \usepackage[plainpages=false]{hyperref}
 \usepackage{amsfonts,amsmath,amssymb,amsthm,accents,mathtools}
 \usepackage{latexsym,lscape,rawfonts,mathrsfs}



 \usepackage[all]{xy}
 \usepackage{eufrak}
 \usepackage{graphicx,psfrag}
 \usepackage{pstool}

 \usepackage{array,tabularx}


 \usepackage{appendix}


\usepackage{txfonts}



 \newcommand{\ba}{\begin{align}}
 \newcommand{\ea}{\end{align}}
 \newcommand{\bal}{\begin{align*}}
 \newcommand{\eal}{\end{align*}}


 \DeclareMathOperator{\diam}{diam}


 \newcommand{\Rm}{\mathbf{Rm}}

 \newcommand{\dvol}{\text{d}V}

\renewcommand{\epsilon}{\varepsilon}


 \makeatletter
 \def\ExtendSymbol#1#2#3#4#5{\ext@arrow 0099{\arrowfill@#1#2#3}{#4}{#5}}
 
 \makeatother

 \makeatletter
 \def\ExtendSymbol#1#2#3#4#5{\ext@arrow 0099{\arrowfill@#1#2#3}{#4}{#5}}
 
 \makeatother

 \definecolor{hao}{rgb}{1,0.5,0}
 \definecolor{miao}{cmyk}{0.5,0,0.2,0.2}
 \definecolor{qiao}{gray}{0.96}


\newtheorem{prop}{Proposition}[section]

\newtheorem{theorem}[prop]{Theorem}

\newtheorem{lemma}[prop]{Lemma}
\newtheorem{claim}[prop]{Claim}

\newtheorem*{theorem*}{Theorem}
\theoremstyle{remark}
\newtheorem{remark}{Remark}

\numberwithin{equation}{section}

\keywords{Almost splitting, collapsing, eigenfunction, Einstein manifold, Ricci
curvature.}

\address{Shaosai Huang, Department of Mathematics, University of
Wisconsin-Madison, 480 Lincoln Drive, Madison, WI, 53706, U.S.A.}
\email{sshuang@math.wisc.edu}

\address{Selin Ta\c{s}kent, Department of Mathematics, Stony Brook University,
100 Nicolls Road, Stony Brook, NY, 11794, U.S.A.}
\email{selin@math.stonybrook.edu}

\title[Eigenfunctions on collapsing Einstein manifolds]{Small fiberwise
oscillation of the eigenfunctions of collapsing Einstein manifolds}
\author{Shaosai Huang}
\author{Selin Ta\c{s}kent}
\date{\today}

\begin{document}
\maketitle

\begin{abstract}
By Cheeger-Colding's almost splitting theorem, if a domain in a Ricci flat
manifold is pointed-Gromov-Hausdorff close to a lower dimensional Euclidean
domain, then there is a harmonic almost splitting map. We show that any
eigenfunction of the Laplace operator is almost constant along the fibers of
the almost splitting map, in the $L^2$-average sense. This generalizes an
estimate of Fukaya in the case of collapsing with bounded diameter and
sectional curvature.
\end{abstract}

\section{Introduction and backgroundd}
A natural and important theme in geometric analysis is to understand the
uniform behavior of the eigenvalues and eigenfunctions of the Laplace operators 
associated to a given family of Riemannian manifolds. In the seminal work
\cite{Fukaya87b}, by introducing the concept of measured Gromov-Hausdorff
convergence, Fukaya proved that the eigenvalues of the Laplace operators of
Riemannian manifolds are continuous with respect to such topology, provided
that the manifolds in consideration have uniformly bounded diameter and
sectional curvature. In proving such continuity, the main difficulty is that a
sequence of Riemannian manifolds may collapse in volume, and a key tool to
overcome this difficulty is the following functional inequality, which is
referred to as the Key Lemma in Fukaya's work \cite[\S 3]{Fukaya87b}:
\begin{theorem}[Fukaya's Key Lemma]\label{thm: Fukaya} 
Suppose a sequence of $m$-dimensional Riemannian manifolds $\{(M_i,g_i)\}$
satisfy the regularity assumptions
\begin{align}
\forall l\in \mathbb{N},\quad \exists C_l>0,\quad
\sup_{M_i}\|\nabla^l\Rm_{g_i}\|\ \le\ C_l, \label{eqn: regularity}
\end{align}
and suppose that there is a $k$-dimensional $(k<m)$ closed Riemannian
manifold $(N,h)$, such that
\begin{align*}
\lim_{i\to\infty} d_{GH}(M_i,N)\ =\ 0.
\end{align*}
Then for any $i$ sufficiently large, and any $u\in C^{\infty}(M_i)$, we have
the estimate
\begin{align}
\|\nabla^Tu\|_{L^{\infty}(M_i)}\ \le\
C_F(m,k)\left(\|u\|_{\bar{L}^2(M_i)}+\|\Delta_{g_i}^qu\|_{\bar{L}^2(M_i)}\right)
d_{GH}(M_i,N),
\end{align}
for some dimensional constants $C_F(m,k)>0$ and some large $q\in \mathbb{N}$,
with $\|\cdot \|_{\bar{L}^2(M_i)}$ denoting the $L^2$-average of a given
function on $M_i$.
\end{theorem}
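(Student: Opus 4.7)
The plan is to lift the problem locally to non-collapsing universal covers, apply uniform elliptic regularity there, and convert $\nabla^T u$ into an orbit derivative of the lifted function under a limiting nilpotent Lie group of symmetries; the factor $\epsilon_i := d_{GH}(M_i,N)$ then appears because the deck group is $\epsilon_i$-dense in the limit Lie group.

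For a point $p_i \in M_i$ nearly achieving the supremum of $|\nabla^T u_i|$, I would consider the universal cover $\pi_i: (\tilde U_i, \tilde p_i) \to (B(p_i, 1), p_i)$ with pulled-back metric $\tilde g_i := \pi_i^* g_i$. By Cheeger--Fukaya--Gromov's local structure theory for collapse under \eqref{eqn: regularity}, $(\tilde U_i, \tilde g_i)$ has uniformly bounded $\|\nabla^l \Rm\|$ and a uniform positive lower bound on injectivity radius at $\tilde p_i$; along a subsequence the equivariant triples $(\Gamma_i \curvearrowright \tilde U_i, \tilde p_i)$ converge in the pointed $C^\infty$-topology to $(\Gamma_\infty \curvearrowright \tilde U_\infty, \tilde p_\infty)$, where the identity component $G \subset \Gamma_\infty$ is nilpotent and the orbit $\Gamma_i \cdot \tilde p_i$ is $O(\epsilon_i)$-dense in $G \cdot \tilde p_\infty$. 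The lift $\tilde u_i := u_i \circ \pi_i$ is $\Gamma_i$-invariant, and a volume computation makes $\|\tilde u_i\|_{\bar{L}^2(\tilde U_i)}$ and $\|\Delta_{\tilde g_i}^q \tilde u_i\|_{\bar{L}^2(\tilde U_i)}$ comparable to the corresponding norms of $u_i$ on $B(p_i, 1) \subset M_i$.

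Because bounded geometry on $(\tilde U_i, \tilde g_i)$ is uniform in $i$, standard elliptic regularity applies with constants independent of $i$: choosing $q$ so that $2q > m/2 + \ell$ for a sufficiently large $\ell$, iterated interior $L^2$-estimates and Sobolev embedding give $\|\tilde u_i\|_{C^\ell(B(\tilde p_i, 1/2))} \le C (\|u_i\|_{\bar{L}^2(M_i)} + \|\Delta_{g_i}^q u_i\|_{\bar{L}^2(M_i)})$. The tangential distribution on $M_i$ pulls back to the distribution tangent to $\Gamma_i$-orbits, and in the limit becomes tangent to $G$-orbits; hence $\nabla^T u_i(p_i)$ is identified with $X \cdot \tilde u_i(\tilde p_i)$ for a unit generator $X$ of the $G$-action. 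By $\Gamma_i$-invariance of $\tilde u_i$ and the $O(\epsilon_i)$-density of $\Gamma_i \cdot \tilde p_i$ in $G \cdot \tilde p_i$, the oscillation of $\tilde u_i$ along the $G$-orbit through $\tilde p_i$ is at most $O(\epsilon_i \|\tilde u_i\|_{C^1})$, so $\tilde u_i$ is $O(\epsilon_i)$-close to being $G$-invariant on a fixed-size region.

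The main obstacle is converting this $O(\epsilon_i)$-oscillation bound into a pointwise bound on $X \cdot \tilde u_i(\tilde p_i)$ with the same $O(\epsilon_i)$-rate: a naive Taylor expansion yields only an $O(\sqrt{\epsilon_i \|\tilde u_i\|_{C^2}})$-estimate. The plan is to exploit the higher $C^\ell$-regularity together with the fact that $X$ is an approximate Killing field of $\tilde g_i$ with defect $O(\epsilon_i)$ controlled by \eqref{eqn: regularity}: commuting $X$ past each power of $\Delta$ introduces only an $O(\epsilon_i)$ correction, and iterated integration by parts against a compactly supported cutoff then converts the pointwise derivative $X \cdot \tilde u_i(\tilde p_i)$ into an integral involving $\Delta^q \tilde u_i$. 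This iteration is what fixes the required value of $q$ and assembles the dimensional constant $C_F(m,k)$ in the stated estimate.
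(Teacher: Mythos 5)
This theorem is quoted from \cite{Fukaya87b} as background; the paper itself contains no proof of it, so I am comparing your sketch against Fukaya's original argument as the introduction and \S 2 of this paper summarize it. Your architecture is the right one and matches Fukaya's: pass to a local (universal) cover where the pulled-back metric is non-collapsed with bounded geometry by \eqref{eqn: regularity}, use uniform elliptic regularity there to convert the $\bar{L}^2$ data on $u$ and $\Delta^q u$ into $C^\ell$ bounds on the lift $\tilde u_i$, and exploit the exact invariance of $\tilde u_i$ under a deck group whose orbit is $O(\epsilon_i)$-dense along the fiber directions. You also correctly isolate the genuine difficulty: small oscillation plus a $C^2$ bound only yields $|\nabla^T u| = O(\sqrt{\epsilon_i})$, not the linear rate claimed.

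The gap is in your proposed resolution of that difficulty. The commutator-with-$\Delta^q$ and integration-by-parts scheme is not worked out and does not obviously close: even if $X$ commuted exactly with $\Delta$, elliptic estimates for $X\tilde u_i$ would be phrased in terms of $\|X\tilde u_i\|_{L^2}$, which is essentially the quantity you are trying to bound; and interpolating between the $C^0$-oscillation $O(\epsilon_i)$ and a $C^k$ bound $O(1)$ only produces $O(\epsilon_i^{1-1/k})$, never $O(\epsilon_i)$. The mechanism Fukaya actually uses --- and which \S 2 of this paper alludes to when it says the difference is ``captured along a geodesic which is almost tangential to the fiber'' --- is a mean-value (Rolle-type) argument exploiting \emph{exact} periodicity rather than approximate invariance: for each short deck transformation $\gamma$ one has the identity $\tilde u_i(\gamma\tilde p_i)=\tilde u_i(\tilde p_i)$, so the derivative of $\tilde u_i$ along the minimal geodesic from $\tilde p_i$ to $\gamma\tilde p_i$ (of length $O(\epsilon_i)$ and almost tangential to the fiber, by Lemma 4-8 of \cite{Fukaya87ld}) must vanish at some interior point; the uniform Hessian bound then transports this zero back to $\tilde p_i$ with error $O(\epsilon_i\|\tilde u_i\|_{C^2})$. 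Running over enough short deck transformations whose directions uniformly span the fiber tangent directions (which follows from the $O(\epsilon_i)$-density of the orbit in a fiber of diameter $O(\epsilon_i)$) gives the pointwise linear bound. Replacing your final step by this argument would complete the proof; as written, that step would fail to improve on the $O(\sqrt{\epsilon_i})$ rate.
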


Here we recall that when the Riemannian manifold $(M_i,g)$ is sufficiently close
to, in the Gromov-Hausdorff sense, another lower diemnsional Riemannian
manifold $(N,h)$, then the regularity assumptions (\ref{eqn: regularity})
guarantee that there is a fibration $\Phi_i:M_i\to N$, which is also an almost
Riemannian submersion. Moreover, the $\Phi_i$ fibers, as embedded submanifolds
in $M_i$, are all homeomorphic to some infranil manifold. Here
for any $x\in M_i$, $\nabla^Tu(x)\in T_x\Phi^{-1}_i(\Phi_i(x))$ denotes the
restriction of $\nabla u(x)$ to the directions tangential to the fiber of
$\Phi_i$ through $x$.

Roughly speaking, Theorem~\ref{thm: Fukaya} tells that when a sequence of
Riemannian manifolds collapses to a lower dimensional one with (\ref{eqn:
regularity}) satisfied, then on those sufficiently collapsed ones in the
sequence, any ``\emph{reasonable}'' function behaves almost like constants
along the fiber directions.  In fact, besides playing a crucial rule in proving
the continuity of the eigenvalues, the Key Lemma provides more information than
needed: when the collapsing limit is a manifold, the eigenfunctions of the
collapsing sequence converge in the $C^1$-sense (see \cite[\S 3]{Fukaya89}),
and this in turn helped prove that the collapsing limit can be embedded into a
finite dimensional Euclidean space by heat kernel methods (see \cite[\S
4]{Fukaya89}).

The curvature assumption (\ref{eqn: regularity}) is crucial here --- withouth
this condition, we cannot expect any topological structure of $M_i$ relating to
$N$, and Fukaya's proof heavily relies on the fact that the fibers are infranil
manifolds: he worked locally on a tangent space of a point, on which the
pull-back metrics do not collapse, and the pull-back functions under
consideration are locally periodic with shorter and shorter periods (see
\cite[\S 3]{Fukaya87b}).

However, with many natural examples of Einstein manifolds collapsing to lower
dimensional metric spaces without \emph{a priori} curvature bounds --- for
instance, the collapsing of Ricci flat $K3$ surfaces constructed by Gross and
Wilson \cite{GW00} and recently by Hein, Sun, Viaclovsky and Zhang
\cite{HSVZ18} --- one wonders if there should be any sort of extension of
Fukaya's Key Lemma to Einstein manifolds that are pointed Gromov-Hausdorff
close to lower dimensional metric spaces at a given scale, \emph{without}
assuming (\ref{eqn: regularity}). To explain the basic setup in this situation,
let us focus on a small piece of a very collapsed Riemannian manifold with
almost non-negative Ricci curvature, and recall the following fundamental
theorem due to Cheeger and Colding (see \cite[Theorem 1.2]{ChCoII} and
\cite[Lemma 1.21]{ChNa14}):
\begin{theorem}[Cheeger-Colding's Almost Splitting Theorem]
\label{thm: almost_splitting} 
Let $(M^m,g)$ be a Riemannian manifold. There there exists $\varepsilon(m)>0$
and $l(m)>10$ to the following effect: suppose a geodesic $lr$-ball
$B(p,lr)\subset M$ satisfies $d_{GH}(B(p,lr),B^k(lr))\le \varepsilon r$ for some
integer $l>l(m)$ and some $\varepsilon \in (0,\varepsilon(m))$, where $B^k(lr)$
denotes the $k$-Euclidean $lr$-ball centered at the origin, then there is a
harmonic map $\Phi: B(p,4r)\to \mathbb{R}^k$ such that
\begin{enumerate}
  \item $\Phi(B(p,2r))\subset B^k(2r)$;
  \item $\sup_{B(p,2r)}|\nabla \Phi^a|\ \le\ C(m)$;
  \item $\fint_{B(p,2r)}\left|\langle \nabla \Phi^a,\nabla
  \Phi^b\rangle-\delta^{ab}\right|\ \le\ \Psi(\varepsilon,l^{-1}|m)$; and
  \item $\fint_{B(p,2r)}|Hess_{\Phi}|^2\ \le\ \Psi(\varepsilon,l^{-1}|m)$.
\end{enumerate}
for some $C(m)>0$ and $\Psi(\varepsilon,l^{-1}|m)>0$, with
$\Psi(\varepsilon,l^{-1}|m)\to 0$ as
$\varepsilon\to 0$ and $l\to \infty$. Here $\delta^{ab}$ denotes the
Kronecker delta and $\Phi^a$ ($a=1,\ldots,k$) denotes a component function of
the vector valued harmonic map $\Phi$.
\end{theorem}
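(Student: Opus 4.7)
The plan is to follow the Cheeger-Colding strategy of harmonic replacement of almost-linear distance functions. For each $a=1,\ldots,k$, I would first use the Gromov-Hausdorff approximation to choose a pair of base points $p_a^{\pm}\in B(p,lr)$ mapped within $\varepsilon r$ of $\pm\tfrac{lr}{2}e_a\in B^k(lr)$, and form the half-differences $\tilde b^a := \tfrac{1}{2}\bigl(d(\cdot,p_a^-)-d(\cdot,p_a^+)\bigr)$. On the Euclidean model these are exactly the coordinate functions, so GH-closeness transfers to sup-norm closeness of $\tilde b^a$ to the $a$-th coordinate. Because the base points lie at distance roughly $lr$ from $B(p,4r)$, the Laplacian comparison theorem forces $\Lap\tilde b^a$ to be small in an averaged sense.

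Next, I would solve the Dirichlet problem $\Lap\Phi^a=0$ on $B(p,4r)$ with boundary data $\tilde b^a$. The maximum principle applied to $\Phi^a-\tilde b^a$, combined with the smallness of $\Lap\tilde b^a$, gives the sup-norm estimate (1). The Cheng-Yau gradient estimate, valid under a lower Ricci bound, then yields (2). For (4), I would apply the Bochner identity to the harmonic function $\Phi^a$,
\[
\tfrac{1}{2}\Lap|\nabla\Phi^a|^2 \;=\; |\text{Hess}\,\Phi^a|^2 + \Rc(\nabla\Phi^a,\nabla\Phi^a),
\]
multiply by a smooth cutoff $\eta$ supported in $B(p,4r)$ with $\eta\equiv 1$ on $B(p,2r)$, and integrate. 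Under the (almost) nonnegative Ricci assumption the curvature term is negligible, and the left-hand side reduces via integration by parts to a quantity bounded by $\fint_{B(p,4r)}\bigl||\nabla\Phi^a|^2-1\bigr|$; this last average is $\Psi(\varepsilon,l^{-1}|m)$-small by the Abresch-Gromoll excess estimate for $d(\cdot,p_a^+)+d(\cdot,p_a^-)$, combined with the Cheeger-Colding segment inequality. Property (3) then follows from (4) by writing
\[
\nabla\langle\nabla\Phi^a,\nabla\Phi^b\rangle \;=\; \text{Hess}\,\Phi^a(\nabla\Phi^b,\cdot)+\text{Hess}\,\Phi^b(\nabla\Phi^a,\cdot),
\]
applying a \Poincare inequality to deduce $L^1$-closeness of $\langle\nabla\Phi^a,\nabla\Phi^b\rangle$ to its mean, and then applying the Pythagorean-type comparison to the pairs $\tilde b^a\pm\tilde b^b$ to identify the mean with $\delta^{ab}$ up to a $\Psi$-error.

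The main obstacle is that $B(p,r)$ may have arbitrarily small volume, so all integrations by parts must be run with volume-averaged integrals $\fint$, where Bishop-Gromov volume comparison supplies constants depending only on $m$ rather than on $\Vol(B(p,r))$. A secondary subtlety is promoting the Abresch-Gromoll excess estimate (formulated for a single pair of base points) into a uniform $L^1$ control of $\bigl||\nabla\Phi^a|^2-1\bigr|$, which requires integrating the excess along geodesics via the segment inequality and carefully tracking the joint dependence on $\varepsilon$ and $l^{-1}$.
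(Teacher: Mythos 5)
The paper does not supply a proof of Theorem~\ref{thm: almost_splitting}; it cites it from Cheeger--Colding \cite{ChCoII} and Cheeger--Naber \cite{ChNa14}, so there is no in-paper proof to compare against. Your sketch does reproduce the standard argument from those references: base points near $\pm\tfrac{lr}{2}e_a$, half-differences of distance functions, harmonic replacement on $B(p,4r)$, Cheng--Yau for the gradient bound, Bochner identity with cutoff for the Hessian, Abresch--Gromoll excess plus the Cheeger--Colding segment inequality for the $L^1$ gradient control, a Pythagorean comparison on $\tilde b^a\pm\tilde b^b$ for almost-orthogonality, and Bishop--Gromov to run everything with volume-averaged integrals.

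Two small points deserve flagging. First, the phrase ``maximum principle applied to $\Phi^a-\tilde b^a$, combined with the smallness of $\Lap\tilde b^a$'' is optimistic as stated: Laplacian comparison bounds $\Lap d(\cdot,p_a^\pm)$ only from above in the barrier/distributional sense, so $\Lap\tilde b^a$ has no pointwise two-sided bound, and the Abresch--Gromoll estimate controls the excess $e=d(\cdot,p_a^+)+d(\cdot,p_a^-)-d(p_a^+,p_a^-)$ (to which the maximum principle \emph{does} apply cleanly, since $e\ge 0$ and $\Lap e\le C/(lr)$), not $\Lap\tilde b^a$ directly. The actual closeness of $\Phi^a$ to $\tilde b^a$ is obtained through energy estimates --- an $L^2$ bound on $\nabla(\Phi^a-\tilde b^a)$ by integrating by parts against the difference --- together with the gradient bound; the pointwise statement (1) then follows from the gradient bound and normalization at $p$. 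Second, the paper's statement of the theorem omits the Ricci lower bound hypothesis (say $\Rc\ge -(m-1)\epsilon^2 r^{-2}g$ on $B(p,lr)$), which is essential and which your proof implicitly and correctly invokes via Laplacian comparison, Cheng--Yau, Bochner, and Bishop--Gromov; without such a hypothesis the theorem is false, so it is worth stating explicitly rather than leaving implicit.
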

When $k<m$, the map $\Phi$ resembles, in a certain sense, the fibration in the
case of collapsing with bounded sectional curvature. However, $\Phi$ is far from
being even a topological fibration, since it may well have singular values.
Considering each point $x\in \Phi^{-1}(\mathcal{R})$, with $\mathcal{R}\subset
\Phi(B(p,2r))$ denoting the regular values of $\Phi$, we define for any
function $u\in C^{\infty}(B(p,2r))$, the vector $\nabla^Tu(x)\in T_xM$ as the
part of $\nabla u(x)$ tangential to the fiber $\Phi^{-1}(\Phi(x))$. In contrast
to the case of collapsing with bounded curvature, even over the regular values
of $\Phi$, we have no information about the specific structure of the fibers of
$\Phi$, except their being closed and embedded submanifolds in $B(p,3r)$. The
purpose of this note is to show that even in this very rough case, there is
still an analogue of Fukaya's Key Lemma:
\begin{theorem}[Smallness of the Fiberwise Gradient $L^2$-Average]\label{thm:
main} Let $B(p,lr)$ be a geodesic ball in an $m$-dimensional Ricci flat
manifold, and suppose that $d_{GH}(B(p,lr),B^k(lr))\le \varepsilon r$ for some
$l>l(m)$ and $r\in (0,\varepsilon(m))$, where $l(m), \varepsilon(m)$ are
positive dimensional constants determined by Theorem~\ref{thm:
almost_splitting}. Then for any $u \in C^{\infty}(B(p,2r))$ satisfying $\Delta
u=\theta u$ for some $\theta \in \mathbb{R}$, we have
\begin{align}\label{eqn: main}
r\|\nabla^T u\|_{\bar{L}^2(B(p,r))}\ \le\
C(m,k,\theta)\|u\|_{L^{\infty}(B(p,2r))}\left(\varepsilon^{\frac{1}{2}}
+\Psi(\varepsilon,l^{-1}|m)\right).
\end{align}
Here the constant $C(m,k,\theta)>0$ is determined only by $m, k$ and
$\theta$, and $\|\cdot\|_{\bar{L}^2(B(p,2r))}$ denotes the $L^2$-average of a 
function on $B(p,2r)$.
\end{theorem}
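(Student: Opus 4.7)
The plan is to first rescale the metric by $g\mapsto r^{-2}g$ so that $r=1$; the target estimate is scale-invariant thanks to the $r$-factor on the left. Applying Theorem~\ref{thm: almost_splitting} produces the harmonic map $\Phi: B(p,4)\to\mathbb{R}^k$; set $V_a:=\nabla\Phi^a$, $w_a:=V_au$, and $G_{ab}:=\langle V_a,V_b\rangle$. The two standard \emph{a priori} estimates I will rely on are the Cheng-Yau gradient bound $\|\nabla u\|_{L^\infty(B(p,3/2))}\le C(\theta)\|u\|_{L^\infty(B(p,2))}$ (using $\mathrm{Ric}\ge 0$) and, via Bochner's formula $\tfrac12\Delta|\nabla u|^2=|\mathrm{Hess}_u|^2+\theta|\nabla u|^2$, the Hessian $L^2$-bound $\int_{B(p,3/2)}|\mathrm{Hess}_u|^2\le C(\theta)\|u\|_{L^\infty(B(p,2))}^2\,\mathrm{Vol}(B(p,2))$.

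Fix a cutoff $\psi$ supported in $B(p,3/2)$ with $\psi\equiv 1$ on $B(p,1)$ and $|\nabla\psi|\le C$, and work with the surrogate $A:=\int\psi^2\bigl(|\nabla u|^2-\sum_a w_a^2\bigr)$. Since the horizontal projection is $P^H\nabla u=\sum_{a,b}g^{ab}w_bV_a$ with $g^{ab}=(G^{-1})^{ab}$, one has $|\nabla^T u|^2-(|\nabla u|^2-\sum_a w_a^2)=\sum_{a,b}(\delta^{ab}-g^{ab})w_aw_b$, so $\int\psi^2|\nabla^T u|^2$ and $A$ differ by an error of size $C\Psi\,\|u\|_\infty^2\,\mathrm{Vol}$: on the good set $\{|G-I|\le 1/2\}$ invoke $\fint|G-I|\le\Psi$, and on its Markov complement (of measure $\le 2\Psi\,\mathrm{Vol}$) use the gradient $L^\infty$-bound, with the critical-value set of $\Phi$ handled similarly.

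To estimate $A$ itself, integrate $\int\psi^2|\nabla u|^2$ and $\sum_a\int\psi^2w_a^2$ by parts using $\Delta u=\theta u$ and $\Delta\Phi^a=0$, and invoke the pointwise identity
\[
\sum_a\nabla w_a\cdot V_a=\theta u+\mathrm{tr}(\mathrm{Hess}_u\,M)+\sum_a\mathrm{Hess}_{\Phi^a}(\nabla u,V_a),
\]
where $M:=\sum_aV_a\otimes V_a-I$. The crucial cancellation is that the $\theta\int\psi^2 u^2$ contributions from both summands cancel exactly, leaving only a cutoff-derivative term $-2\int\psi u\,\nabla\psi\cdot(\nabla u-\sum_a w_aV_a)$ and two interior corrections corresponding to the decomposition $M=(\sum_aV_a\otimes V_a-P^H)-P^T$. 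The $(\sum_aV_a\otimes V_a-P^H)$-piece and the $\mathrm{Hess}_{\Phi^a}$-piece are each bounded by $C\Psi^{1/2}\|u\|_\infty^2\mathrm{Vol}$ via Cauchy-Schwarz against $\fint|G-I|\le\Psi$, $\fint|\mathrm{Hess}_\Phi|^2\le\Psi$, and the Hessian $L^2$-bound on $u$.

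The main obstacle is the residual $-\int\psi^2 u\,\mathrm{tr}_T(\mathrm{Hess}_u)$ term (the trace along fiber-tangential directions), together with the cutoff-derivative term, since naively integrating $\mathrm{tr}_T(\mathrm{Hess}_u)$ by parts reproduces $\int\psi^2|\nabla^T u|^2$ and threatens a tautology. This is where the Gromov-Hausdorff distance $\varepsilon$ must enter directly, since the almost-splitting quantities $\Psi$ alone have already been consumed. I anticipate using a fiber-diameter bound: comparing $\Phi$ with the $\varepsilon$-GH approximation $\pi: B(p,4)\to\mathbb{R}^k$ forces the regular fibers of $\Phi$ to lie in tubes of diameter $C(\varepsilon+\Psi)$ in $M$, so that the pointwise gradient bound on $u$ forces the fiberwise oscillation of $u$ to be at most $C\varepsilon\|u\|_\infty$. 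Inserted into the cutoff-derivative term via a weighted Cauchy-Schwarz, this should deliver the $\varepsilon^{1/2}$-factor, and the tangential-trace remnant can then be absorbed back into $\int\psi^2|\nabla^T u|^2$ on the left. Making the fiber-diameter control precise and handling the critical-value set of $\Phi$ are the principal technical burdens.
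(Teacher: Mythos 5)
Your reduction via Cheng--Yau and Bochner to $r|\nabla u|\le CK$ and $r^4\fint|\mathrm{Hess}_u|^2\le CK^2$ matches the paper's final step, and the bookkeeping in your Bochner-type identity (the exact cancellation of the $\theta\int\psi^2u^2$ terms, the decomposition $M=(\sum_aV_a\otimes V_a-P^H)-P^T$, the Markov argument on the bad set of $G$) is fine as far as it goes. But the proof has a genuine gap precisely at the point you flag and do not resolve. After the controllable errors are removed you are left with $\int\psi^2|\nabla^Tu|^2\approx A\approx-\int\psi^2u\,\mathrm{tr}_T(\mathrm{Hess}_u)+(\text{cutoff term})$, and fiberwise integration by parts turns $-\int\psi^2u\,\mathrm{tr}_T(\mathrm{Hess}_u)$ back into $+\int\psi^2|\nabla^Tu|^2$ plus errors, so ``absorbing the remnant into the left'' yields $0\le(\text{errors})$ and no estimate. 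Feeding the fiber-diameter bound into the \emph{cutoff-derivative} term cannot break this circle, because that term is not where the circularity lives. To break it you must pair the oscillation bound $|u-\bar u_F|\le CK\varepsilon$ against the tangential trace itself, e.g.\ via $\int_F\psi^2|\nabla^Tu|^2=-\int_F\psi^2(u-\bar u_F)\Delta_Fu+\dots$; but $\Delta_Fu=\mathrm{tr}_T(\mathrm{Hess}_u)-\vec H\cdot\nabla u$ brings in the mean curvature of the fibers, which involves $J_k(\nabla\Phi)^{-1}$ and has no uniform bound since $\lambda_{\min}(G)$ has no positive lower bound off the almost-splitting good set --- and that bad set is not a union of fibers, so it cannot be discarded before the fiberwise integration by parts. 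You also never justify that the singular fibers $\Phi^{-1}(\Phi(\Sigma))$ are negligible; this is exactly where Ricci-flatness enters the paper (real analyticity of $g$, hence of $\Phi$, and Lemma~\ref{lem: null_singular_fiber}), and for a merely smooth $\Phi$ the preimage of the critical values can have positive measure.

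For contrast, the paper's route avoids the elliptic identity entirely: it flows each regular fiber by $\nabla^Tu$, proves long-time existence of the flow, and compares two evaluations of $\int_{\Phi^{-1}(\mathcal R)}(u\,|J_k(\nabla\Phi)|)(\gamma_x(t))\,\dvol_g(x)$ --- a lower bound of order $t\int|\nabla^Tu|^2|J_k(\nabla\Phi)|$ minus errors, and an upper bound of order $K\varepsilon$ from the fiber diameter --- then optimizes $t=\sqrt\varepsilon K^{-1}r^2$ to produce the $\varepsilon^{1/2}$. The weight $|J_k(\nabla\Phi)|=\sqrt{\det G}$ is essential there: after diagonalizing $G$ pointwise by a constant orthogonal matrix, every factor of $G^{-1}$ is cancelled by positive powers of the eigenvalues coming from $\sqrt{\det G}$, which is exactly the mechanism your unweighted scheme lacks. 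If you wish to salvage your approach, you would need to (i) weight all fiberwise integrals by $|J_k(\nabla\Phi)|$ so the mean-curvature terms become integrable against $\fint|\mathrm{Hess}_\Phi|^2\le\Psi^2$, (ii) deploy the oscillation bound against $\Delta_Fu$ rather than against the cutoff term, and (iii) invoke analyticity for the singular fibers; as written, the argument is incomplete.
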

\begin{remark}
Our estimate here only relies on the lower bound of the Ricci curvature. The
Ricci flatness guarantees that the metric is real analytic, and the theorem
still holds for any manifold with an \emph{analytic} metric, whose Ricci
curvature being bounnded below; see Theorem~\ref{thm: Ricci_tangential_L2}.
Especially, this theorem works for K\"ahler manifolds with holomorphic
bisectional curvature bounded below.
\end{remark}

Notice that the definition of $\nabla^Tu$ is not canonical --- it depends on the
harmonic almost splitting map $\Phi$. The same phenonmenon even occures in the
case of collapsing with bounded curvature, where the fibration map also faces
many choices, see \cite{Fukaya87ld}. 

We would now like to mention our novelty in proving Theorem~\ref{thm:
main} and briefly forcast the contents of the note. The conventional strategy in
dealing with collapsing when (\ref{eqn: regularity}) is satisfied, is to pull
the metric back to a local universal covering space, which approximately looks
like a product, and Fukaya's proof of Theorem~\ref{thm: Fukaya} hinges upon
this property. When (\ref{eqn: regularity}) is weakened to a mere Ricci
curvature lower bound, while this strategy works in certain situations like 
controlling the fundamental group or understanding the infinitesimal behavior of
the metric (see e.g. \cite{NZ14} and \cite{HKRX18}), it cannot lead us to the
more delicate gradient estimate as (\ref{eqn: main}), without any extra
assumption (like Ricci bounded covering geometry). 
This is mainly due to the lack of a fibration (or submersion) structure, as well
as the absence of the structural information of the $\Phi$ fibers. We realize,
however, that the specific structure of a $\Phi$ fiber do not affect our
estimate on the change of a function along the fiber --- only size
matters. Therefore, our main efforts are plunged into the immersion side of the
picture: the regular fibers of the almost \emph{submersion} $\Phi$ are compactly
\emph{embedded} submanifolds. In \S 2, we will investigate the variation of
$|\nabla^Tu|^2$ along the dynamics driven by $\nabla^Tu$, and obtain the estimate
\begin{align*}
\left|\nabla^Tu\right|^2(\gamma_x(t))\ \ge\ e^{-Ct}\left|\nabla^Tu\right|^2(x),
\end{align*}
where $\gamma_x$ is the flow line of the vector field $\nabla^Tu$ along
$\Phi^{-1}(\Phi(x))$, starting from $x$ with $\Phi(x)\in \mathcal{R}$. On
the other hand, for any $t>0$, $\gamma_x(t)$ stays within $\Phi^{-1}(\Phi(x))$, 
so the \emph{a priori} gradient estimate of $u$ and the diameter bound 
$\diam(\Phi^{-1}(\Phi(x)),g)< 2\varepsilon r$ ensure that
$|u(x)-u(\gamma_x(t))|<C\varepsilon$. Therefore, $|\nabla^Tu|(x)$ cannot be too
large compared to $\varepsilon$, because integrating the above inequality with
respect to $t$ controls $\left|\nabla^Tu\right|(x)$ from above. In \S 3, this
argument will be extended, in the $L^2$-average sense, across all fibers of
$\Phi$, with the integral Jacobian and Hessian estimates porvided by
Theorem~\ref{thm: almost_splitting}. Here in the process of extension, we need
the analyticity of the metric to ensure that the singular fibers have zero
total measure.

The estimate in Theorem~\ref{thm: main} is an effective $L^2$-average gradient
estimate rather than the original $L^{\infty}$-gradient estimate --- replacing
the pointwise estimates by the local $L^2$-average estimate as we change from
sectional curvature bounds to the corresponding Ricci curvature bounds is a
natural and necessary phenomenon ever since the early works of Colding on the
local $L^2$-average estimates of the regularized angle and distance functions,
see \cite{ColdingI, ColdingII, ColdingIII}. Such $L^2$-averative gradient
estimates usually suffice to relate the metric measure properties of the
collapsing manifolds with those of the collapsing limit spaces.

In \cite{ChCoIII}, Cheeger and Colding generalized Fukaya's eigenvalue
continuity theorem to manifolds with only Ricci curvature lower bound, without
referring to an analogue of Fukaya's Key Lemma, but we believe the $L^2$-average
tangential gradient estimate in Theorem~\ref{thm: main} may provide some new
tool to sharpen our understanding of the collapsing limits. Let us mention one
potential application of Theorem~\ref{thm: main} as an example: suppose a
sequence of Einstein manifolds $\{(M_i,g_i)\}$ of uniformly bounded diameters
and Einstein constants Gromov-Hausdorff converges to a limit metric space
$(X,d)$ of lower diemsion (in the sense of \cite[Theorem 1.12]{CoNa11}), then
by using a covering argument, as well as \cite[Theorem 3.23]{ChCoIII}, we can
apply Theorem~\ref{thm: main} to show that the eigenfunctions converge to those
on the limit in the $H^1$-sense (see \cite{Cheeger99} and \cite{ChCoIII}),
whence the $H^1$-convergence of the related heat kernels (see \cite[\S
6]{ChCoIII} and \cite{Ding02}), and consequently, this will lead to an
embedding theorem of $(X,d)$ into the infinite dimensional Hilbert space
$L^2(X)$, in view of the related arguments in \cite{Fukaya89}. Yet we will not
put more details here, since the above mentioned $H^1$-convergence and
embedding results have recently been obtained by Ambrosio, Honda, Portegies and
Tewodrose \cite{AHPT18} in more general settings. Their approach relies on the
more abstract theory of $RCD^{\ast}$-spaces developed by Ambrosio, Gigli and
Savar\'e, see \cite{AGS14a, AGS14b}; and this is in turn based on the
Lott-Sturm-Villani characterization of the Ricci curvature lower bound via
optimal transport, see\cite{LV09, Sturm06a, Sturm06b}.

\section{A priori estimate along the regular fibers}
\addtocontents{toc}{\protect\setcounter{tocdepth}{1}}

In this section we control the size of the tangential derivatives of a function
on a regular fiber of $\Phi$. Here we will overcome the main technical difficulty
--- the lack of a geometric structure of any regular fiber of $\Phi$ --- by
investigated the dynamics driven by the tangential gradient vector fields along
the regular fibers, only relying on the fact that they are small compactly
embedded sub-manifolds.

Let $\Phi: B(p,4r)\to \mathbb{R}^k$ be given as in Theorem~\ref{thm:
almost_splitting}. Since we may assume that $\Phi$ provides an $\varepsilon
r$-Gromov-Hausdorff approximation of $B(p,2r)$ to $B^k(2r)$ (see \cite[Lemma
9.16]{Cheeger10}), we have
\begin{align}\label{eqn: small_fiber}
\forall \vec{v}\in \Phi(B(p,4r)),\quad \diam (\Phi^{-1}(\vec{v}),g)\ \le\
2\varepsilon r.
\end{align}
 
Moreover, let us denote the Jacobian matrix of $\Phi$ as
\begin{align}\label{eqn: Jacobian_defn} \forall x\in B(p,r),\quad 
J_k(\nabla \Phi)(x)\ :=\ \left[\langle \nabla \Phi^a(x),\nabla
\Phi^b(x)\rangle\right]_{k\times k},
 \end{align}
with $\lambda(x)$ and $\Lambda(x)$ denoting its least and largest
 eigenvalues, respectively.
 
 For any smooth function $u$ on $B(p,2r)$, we now present the following \emph{a
 priori} estimate of the tangential derivative in terms of $\lambda$, $\Lambda$,
 $|Hess_u|$ and $|Hess_{\Phi}|$:
 \begin{lemma}[\emph{A priori} estimate]\label{lem:  apriori_estimate} 
 Fix $r\in (0,1)$. Let $\vec{v}\in \Phi(B(p,2r))\subset \mathbb{R}^k$ be a
 regular value of $\Phi$, and we employ the following notations:
\begin{align}\label{eqn: Phi_bounds}
\lambda\ :=\ \inf_{\Phi^{-1}(\vec{v})}\lambda(x),\quad
\Lambda\ :=\ \sup_{\Phi^{-1}(\vec{v})} \Lambda(x),\quad
\text{and}\quad C_0\ :=\ \max_{1\le b\le k}
\sup_{\Phi^{-1}(\vec{v})}r^2\left|Hess_{\Phi^b}\right|^2,
\end{align}
and
\begin{align}\label{eqn: u_bounds}
K\ :=\ \sup_{B(\Phi^{-1}(\vec{v}),2\varepsilon r)}\left(r|\nabla u|+r^2
\left|Hess_u\right|\right).
\end{align}
Let us also recall that $\nabla^Tu$ denotes the part of $\nabla u$ tangential to
$\Phi^{-1}(\vec{v})$, then
\begin{align}\label{eqn: tangential_u_bound}
\sup_{\Phi^{-1}(\vec{v})}r\left|\nabla^Tu\right|\ \le\ 
2\left(1+\lambda^{-1}\sqrt{\Lambda k}C_0\right)^{\frac{1}{2}}\
K\sqrt{\varepsilon}.
\end{align}
\end{lemma}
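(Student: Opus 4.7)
The plan is to play off two competing estimates for $\int |P|^2\,dt$ along the integral curves of the tangential gradient $P := \nabla^T u$ on the regular fiber $F := \Phi^{-1}(\vec v)$. Since $\vec v$ is regular, $F$ is a smooth closed embedded submanifold of $B(p,4r)$ whose $g$-diameter is at most $2\varepsilon r$ by (\ref{eqn: small_fiber}); at every $y \in F$ the normal space is spanned by $\{\nabla\Phi^a(y)\}_{a=1}^k$, so
\begin{align*}
P \;=\; \nabla u \;-\; \sum_{a,b} c^{ab}\,\langle \nabla u,\nabla\Phi^b\rangle\,\nabla\Phi^a,
\qquad (c^{ab}) := J_k(\nabla\Phi)^{-1},
\end{align*}
is a smooth vector field tangent to $F$. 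Compactness of $F$ ensures that $P$ generates a complete flow $\gamma_x: \mathbb{R}\to F$ with $\gamma_x(0) = x$, and since $P\Phi^a \equiv 0$ the flow never leaves the fiber.

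The upper estimate is immediate. Because $\langle \nabla u - P, P\rangle = 0$ one has $\tfrac{d}{dt}u(\gamma_x(t)) = |P|^2(\gamma_x(t))$, and combining $d_g(\gamma_x(T),x) \le \mathrm{diam}(F,g) \le 2\varepsilon r$ with $|\nabla u| \le K/r$ yields
\begin{align*}
\int_0^T |P|^2(\gamma_x(t))\,dt \;=\; u(\gamma_x(T)) - u(x) \;\le\; 2K\varepsilon \qquad \text{for every } T > 0.
\end{align*}

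For the lower estimate I will derive a Gronwall-type inequality for $f(t) := |P|^2(\gamma_x(t))$. The essential cancellation is that, when one differentiates the projection formula for $P$ in the direction $P$, every term in which the derivative falls on the scalar coefficients $c^{ab}\langle \nabla u,\nabla\Phi^b\rangle$ is paired with a $\nabla\Phi^a$ and hence killed by $\langle \nabla\Phi^a, P\rangle = 0$; what survives is
\begin{align*}
\tfrac12 f'(t) \;=\; Hess_u(P,P) \;-\; \sum_{a,b} c^{ab}\langle \nabla u,\nabla\Phi^b\rangle\, Hess_{\Phi^a}(P,P).
\end{align*}
Bounding the first term by $(K/r^2)f$ via (\ref{eqn: u_bounds}), and the second by a Cauchy--Schwarz chain that uses the operator-norm bound $\|J_k^{-1}\|_{op}\le\lambda^{-1}$, the pointwise bounds $|\nabla\Phi^b| \le \sqrt{\Lambda}$ from (\ref{eqn: Phi_bounds}), and $\sum_a |Hess_{\Phi^a}(P,P)|^2 \le kC_0 f^2/r^2$, will give a differential inequality $f'(t) \ge -2A\,f(t)$ with $A = (K/r^2)\bigl(1 + \lambda^{-1}\sqrt{\Lambda k}\,C_0\bigr)$. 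Gronwall then produces $f(t) \ge e^{-2At}f(0)$ for $t\ge 0$. Sending $T\to\infty$ in the upper estimate and integrating $e^{-2At}$ gives $f(0)/(2A) \le 2K\varepsilon$, which rearranges to (\ref{eqn: tangential_u_bound}) after multiplying by $r^2$ and taking $\sup_{x\in F}$.

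The main obstacle will be getting the Gronwall constant $A$ into the clean form $(K/r^2)(1 + \lambda^{-1}\sqrt{\Lambda k}\,C_0)$: several natural pairings in the Cauchy--Schwarz step produce extra factors of $\sqrt{k}$ or swap $\sqrt{C_0}$ with $C_0$, and only a specific grouping --- treating $\sum_a w^a\nabla\Phi^a$ (with $w = J_k^{-1}v$) through the operator norm and absorbing $\sqrt{\sum_a |Hess_{\Phi^a}(P,P)|^2}$ as a single block --- lands on the stated constant. Completeness of the flow and non-escape from $F$ are not a concern thanks to the compactness of $F$ together with $P\Phi^a\equiv 0$; the case $P(x)=0$ is trivial, and otherwise the lower bound $f(t)\ge e^{-2At}f(0)>0$ shows the flow never reaches a zero of $P$.
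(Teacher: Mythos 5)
Your proposal is correct and follows essentially the same route as the paper's proof: the same differential identity for $\nabla_{\nabla^Tu}\nabla^Tu$ (obtained there via the second fundamental form, here by differentiating the projection formula and using $\langle\nabla\Phi^a,\nabla^Tu\rangle=0$, which is equivalent), the same Gronwall lower bound $|\nabla^Tu|^2(\gamma_x(t))\ge e^{-2At}|\nabla^Tu|^2(x)$, and the same upper bound $\int_0^T|\nabla^Tu|^2\,dt\le 2K\varepsilon$ from the fiber diameter, combined by letting $T\to\infty$. Your one-line completeness argument (smooth vector field tangent to a compact embedded fiber) is a clean shortcut for the paper's Claim~\ref{clm: extending_flow_line}, and the constant bookkeeping you flag as delicate is exactly the grouping the paper uses.
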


The basic idea in proving this lemma resembles that of the Key Lemma in
\cite[(4.3)]{Fukaya87b}: with the Hessian bound of the given function, its
gradient cannot alter too much within a small fiber; therefore the large
gradient at one point will create a marked difference from nearby points, in
terms of the value of the function; but this difference is in turn controlled by
the uniform gradient bound and the size of the fiber. However, in
\cite[(4.3)]{Fukaya87b}, such difference is captured along a geodesic which is
almost tangential to the fiber (by \cite[Lemma 4-8]{Fukaya87ld}), whereas in our
case, there is no such geometric structure available, and instead, we follow
the flow lines of the tangential gradient fields to compute the difference.

\begin{proof}
For the given $x\in\Phi^{-1}(\vec{v})$, let us denote
$\sup_{\Phi^{-1}(\vec{v})}\left|\nabla^Tu\right|=\delta_0$. We may assume
$\delta_0>0$ since otherwise (\ref{eqn: tangential_u_bound}) is trivial. 

Since $\vec{v}$ is a regular value of $\Phi$ and $\Phi$ is continuous, we know
that $\Phi^{-1}(\vec{v})$ is a closed embedded submanifold of $B(p,3r)$ (see
\cite[Theorem 1.38]{Warner}), and by (\ref{eqn: small_fiber}) we know that
$\Phi^{-1}(\vec{v})$ is compact, implying that $\lambda>0$. Moreover, there
exists some $x\in \Phi^{-1}(\vec{v})$ such that $\delta_0 =\left|
\nabla^Tu\right|(x)$, and we could consider the integral curve $\gamma_x$ of
$\nabla^Tu$ with initial value $x\in \Phi^{-1}(\vec{v})$. The curve
$\gamma_x(t)$ is defined at least up to some small $t>0$, and for any such $t$
we have
\begin{align*}
\frac{\text{d}}{\text{d}t}\left|\nabla^Tu\right|^2(\gamma_x(t))\ 
=\ \nabla_{\nabla^Tu}\left|\nabla^Tu\right|^2\ =\ 2\langle
\nabla_{\nabla^Tu}\nabla^Tu,\nabla^Tu\rangle.
\end{align*}

We also notice that for any smooth vector fields $X$ and $Y$
tangential to $\Phi^{-1}(\vec{v})$,
\begin{align*}
\nabla_X\langle \nabla^Tu,Y\rangle\ =\ &\nabla_X\langle \nabla
u,Y\rangle\\
=\ &Hess_u(X,Y)+\langle \nabla u,(\nabla_XY)^T\rangle+\langle \nabla
u,(\nabla_XY)^{\perp}\rangle\\
=\ &Hess_u(X,Y)+\langle \nabla^Tu,\nabla_XY\rangle-
\sum_{a,b=1}^{k}J_k(\nabla \Phi)^{-1}_{ab}\langle\nabla u,\nabla
\Phi^a\rangle \langle \nabla \Phi^b,\nabla_XY\rangle,
\end{align*}
where $(\nabla_XY)^T$ and $(\nabla_XY)^{\perp}$ are respectively the parts of
$\nabla_XY$ tangential and perpendicular to $\Phi^{-1}(\vec{v})$, and
$J_k(\nabla \Phi)^{-1}_{ab}$ is $(a,b)$-entry of the inverse of the Jacobian
matrix of $\Phi$.

Notice that $\langle \nabla \Phi^b,\nabla_XY\rangle$ is the $|\nabla \Phi^b|$
multiple of the second fundamental form of $\Phi^{-1}(\vec{v})$ in the direction
of $\nabla \Phi^b$, and we have
\begin{align*}
\langle \nabla \Phi^b,\nabla_XY\rangle\ =\ -Hess_{\Phi^b}(X,Y).
\end{align*}

Therefore we have
\begin{align}\label{eqn: tangential_u_derivative}
\begin{split}
\langle\nabla_X (\nabla^Tu),Y\rangle\ =\ &\nabla_X\langle\nabla^Tu,Y\rangle-
\langle \nabla^Tu,\nabla_XY\rangle\\
=\ &Hess_u(X,Y)+\sum_{a,b=1}^kJ_k(\nabla \Phi)^{-1}_{ab}\langle \nabla u,\nabla
\Phi^a\rangle Hess_{\Phi^b}(X,Y),
\end{split}
\end{align}
and by (\ref{eqn: Phi_bounds}) we could estimate
\begin{align*}
\left|\sum_{a,b=1}^kJ_k(\nabla \Phi)^{-1}_{ab}\langle \nabla u,\nabla
\Phi^a\rangle Hess_{\Phi^b}(X,Y)\right|\ \le\
\lambda^{-1}\sqrt{\Lambda k}C_0 r^{-1}|\nabla u||X||Y|.
\end{align*}
Further considering (\ref{eqn: u_bounds}), we obtain from (\ref{eqn:
tangential_u_derivative}) and the last inequality that
\begin{align}\label{eqn: tangential_u_derivative_bd}
\sup_{\Phi^{-1}(\vec{v})}\left|\nabla^T(\nabla^Tu)\right|\
\le\ (1+\lambda^{-1}\sqrt{\Lambda k}C_0) Kr^{-2}.
\end{align}
Especially, for $X=Y=\nabla^Tu$, since $\gamma_x$ is the integral curve of
$\nabla^Tu$, we have
\begin{align}\label{eqn: derivative}
\begin{split}
\left|\frac{\text{d}}{\text{d}t}\left|\nabla^Tu\right|^2(\gamma_x(t))\right|\
\le\ 2(1+\lambda^{-1}\sqrt{\Lambda k}C_0)
Kr^{-2}\left|\nabla^Tu\right|^2(\gamma_x(t))
\end{split}
\end{align}
whenever the curve $\gamma_x(t)$ is defined up to $t>0$.

Now integrating along $\gamma_x$ up to the time $t>0$ when $\gamma_x(t)$
remains being defined, we have
\begin{align}\label{eqn: gamma_dot_lb}
\begin{split}
\langle \nabla^Tu,\dot{\gamma}_x(t)\rangle\ =\
&\left|\nabla^Tu\right|^2(\gamma_x(t))\\
\ge\ &e^{-2(1+\lambda^{-1}\sqrt{\Lambda k}C_0) Kr^{-2}t}\delta_0^2.
\end{split}
\end{align}

This inequality tells that $\left|\nabla^Tu\right|^2(\gamma_x(t))$ is always
comparable to its initial value $\delta_0^2$, and it helps us extend the the
flow line $\gamma_x(t)$, i.e. we have the following
\begin{claim}\label{clm: extending_flow_line}
$\gamma_x(t)$ is defined for any $t\ge 0$.
\end{claim}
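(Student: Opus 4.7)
The plan is to deduce global existence of $\gamma_x$ from three ingredients: smoothness of the tangential gradient field along the fiber, a uniform upper bound on its length, and compactness of the regular fiber. Together these rule out both blow-up and escape in finite time, and the claim will follow by a standard maximal-interval argument.

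First I would verify that $\nabla^T u$ is a smooth vector field on $\Phi^{-1}(\vec{v})$ and tangent to it. Since $\vec{v}$ is a regular value, $J_k(\nabla \Phi)$ is invertible in a neighborhood of the fiber, so the explicit projection formula
\[
\nabla^T u \;=\; \nabla u \;-\; \sum_{a,b=1}^{k} J_k(\nabla\Phi)^{-1}_{ab}\,\langle \nabla u,\nabla \Phi^a\rangle\, \nabla \Phi^b
\]
exhibits $\nabla^T u$ as a smooth section of $T\Phi^{-1}(\vec{v})$. Standard ODE theory then gives a local solution $\gamma_x$ issuing from $x\in \Phi^{-1}(\vec{v})$, which remains in the fiber throughout its interval of existence because the driving vector field is tangent to $\Phi^{-1}(\vec{v})$.

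Next I would record that $\Phi^{-1}(\vec{v})$ is compact. It is a closed subset of $B(p,4r)$ (preimage of a point under the continuous map $\Phi$), and by (\ref{eqn: small_fiber}) it has diameter at most $2\varepsilon r$ while meeting $B(p,2r)$; for $\varepsilon$ small as in Theorem~\ref{thm: almost_splitting}, it therefore lies in a closed geodesic ball compactly contained in $B(p,4r)$, which is precompact by Hopf--Rinow applied to the complete Ricci-flat ambient manifold.

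Finally, let $[0,T_{\max})$ be the maximal interval of existence. The speed $|\dot\gamma_x(t)|=|\nabla^T u|(\gamma_x(t))$ is dominated by $Kr^{-1}$ from (\ref{eqn: u_bounds}), so $\gamma_x$ is uniformly Lipschitz in $t$. If $T_{\max}<\infty$, then $\gamma_x(t)$ is Cauchy as $t\to T_{\max}^-$, and compactness of $\Phi^{-1}(\vec{v})$ yields a limit $y\in \Phi^{-1}(\vec{v})$; restarting the local flow at $y$ extends $\gamma_x$ past $T_{\max}$, contradicting maximality. Hence $T_{\max}=\infty$. The only subtle point I anticipate is to make sure, at every step, that $\gamma_x$ has not slipped off the regular fiber, which is exactly what tangency of $\nabla^T u$ and the regularity of $\vec{v}$ guarantee; the lower bound (\ref{eqn: gamma_dot_lb}) on $|\nabla^T u|(\gamma_x(t))$ derived above is not needed here, but will of course be used subsequently when integrating to control $\delta_0$.
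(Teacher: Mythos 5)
Your proof is correct, and it takes a cleaner and more standard route than the paper's. Both arguments are a reductio: assume the maximal existence time $T_{\max}$ is finite, extract a limit point $y\in\Phi^{-1}(\vec v)$, and restart the flow at $y$ to contradict maximality. Where you diverge is in what you feel obliged to verify along the way. The paper devotes most of its proof to showing that the (subsequential) convergence $\gamma_x(t_i)\to y$ also holds with respect to the \emph{intrinsic} metric $d_{\Phi^{-1}(\vec v)}$, via slice charts and comparison of normal neighborhoods for $g$ and $g|_{\Phi}$, because it wants the lower bound (\ref{eqn: gamma_dot_lb}) to persist at the limit and thereby conclude $\nabla^Tu(y)\neq 0$ before restarting. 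You sidestep both points: the uniform speed bound $|\dot\gamma_x|=|\nabla^Tu|(\gamma_x)\le Kr^{-1}$ coming from (\ref{eqn: u_bounds}) makes $\gamma_x$ Lipschitz in $t$, hence Cauchy in $M$ as $t\to T_{\max}^-$ with no need to pass to a subsequence; and since $\Phi^{-1}(\vec v)$ is a closed embedded compact submanifold, the subspace topology from $d_M$ already agrees with the intrinsic manifold topology, so the extrinsic-versus-intrinsic comparison is automatic. Your restart step is then the standard escape lemma for a smooth vector field on a compact manifold without boundary, and, as you correctly remark, it does not require $\nabla^Tu(y)\neq 0$ --- so (\ref{eqn: gamma_dot_lb}) is genuinely not needed for this claim. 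Both proofs are valid; yours is shorter and isolates the two facts that actually do the work (smoothness of $\nabla^Tu$ as a tangent field on the regular fiber, and compactness of the fiber), whereas the paper's extra care in carrying (\ref{eqn: gamma_dot_lb}) to $t=T$ buys nothing here, since once completeness of the flow is established that inequality holds on all of $[0,\infty)$ for free.
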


\begin{proof}[Proof of claim]
Clearly, $\gamma_x(t)$ exists at least up to some small positive time. Now let
$T$ be the supremum of the extence time for $\gamma_x(t)$ and assume, for the
purpose of a contradiction argument, that $T<\infty$. For any sequence
$t_i\nearrow T$, since $\{\gamma_x(t_i)\}\subset \Phi^{-1}(\vec{v})\Subset
B(x,3\varepsilon r)$ and $\Phi$ is continuous, there exists some $y\in
\Phi^{-1}(\vec{v})$ and a subsequence, still denoted by $\{\gamma_x(t_i)\}$,
such that $\lim_{i\to \infty}d_M(\gamma_x(t_i),y)= 0$.

We need to show that the above convergence is also with respect to the intrinsic
metric $d_{\Phi^{-1}(\vec{v})}$ --- this is because (\ref{eqn:
tangential_u_derivative_bd}) only provides a derivative control of $\nabla^Tu$
in the directions tangential to $\Phi^{-1}(\vec{v})$, and in order to apply this
to guarantee that (\ref{eqn: gamma_dot_lb}) persists in taking limit, we need to
show that the convergence $\gamma_x(t_i)\to y$ actually takes place within the
Riemannian manifold $(\Phi^{-1}(\vec{v}), g|_{\Phi})$, where $g|_{\Phi}$ denotes
the metric tensor $g$ restricted to $T\Phi^{-1}(\vec{v})$.

To this end, we notice that since $\vec{v}$ is a regular value of $\Phi$, there
is a positive radius $r_0\le \varepsilon r$, such that the following conditions
are satisfied for the geodesic ball $B(y,r_0)$:
\begin{enumerate}
  \item[(a)] there is a co-ordinate chart of $B(y,r_0)\subset B(p,r)$ in which
  $B(y,r_0)\cap \Phi^{-1}(\vec{v})$ is a single slice, see \cite[Theorem
  1.38]{Warner};
  \item[(b)] $B(y,r_0)$ is contained in a normal neightborhood of $y$ ---
  especially, within $B(y,r_0)$ the distance $d_M(y,\cdot)$ is realized by the
  length of $g$-geodesic segments emanating from $y$ and entirely contained in
  $B(y,r_0)$, see \cite[Proposition 3.6]{doCarmo}.
  \end{enumerate} 
Now regarding $y$ as a point on the Riemannian manifold $(\Phi^{-1}(\vec{v}),
g|_{\Phi})$, and let $r_1\le r_0$ be a radius such that
$B_{d_{\Phi^{-1}(\vec{v})}}(y,2r_1)$ is contained in a normal neighborhood of
$y$ for $g|_{\Phi}$. Now there is a radius $r_2\in (0,r_1)$ so that within
$B(y,r_2)\cap \Phi^{-1}(\vec{v})$, the intrinsic distance
$d_{\Phi^{-1}(\vec{v})}(y,\cdot)$ is realized by the length of
$g|_{\Phi}$-geodesic segments emanating from $y$ and staying within
$B_{d_{\Phi^{-1}(\vec{v})}} (y,r_1) \subset \Phi^{-1}(\vec{v})$ entirely. Since
under the local co-ordinate chart in (a) any such $g|_{\Phi}$-geodesic segment
stays within the slice, we can obviously see that the intrinsic distance
$d_{\Phi^{-1}(\vec{v})}(y,\cdot)$ and the extrinsic distance
$d_M(y,\cdot)|_{\Phi^{-1}(\vec{v})}$ are comparable --- up to a factor
controlled by $\|\Phi\|_{C^3(B(y,4\varepsilon r))}$. Therefore, since each 
$\gamma(t_i)\in \Phi^{-1}(\vec{v})$, the convergence
$\lim_{i\to \infty}d_M(y,\gamma_x(t_i))= 0$ is equivalent to the convergence
$\lim_{i\to \infty}d_{\Phi^{-1}(\vec{v})}(\gamma_x(t_i),y)= 0$ .

Now we see that (\ref{eqn: tangential_u_derivative_bd}) and (\ref{eqn:
gamma_dot_lb}) guarantee that $\nabla^Tu(\gamma(t_i))\to \nabla^Tu(y)$ as
$i\to\infty$ in $T\Phi^{-1}(\vec{v})$, and consequently
$\left|\nabla^Tu\right|^2(y)\ge e^{-2(1+\lambda^{-1}\sqrt{\Lambda k}C_0)
Kr^{-2}T}\delta_0^2>0$; on the other hand, since $B(y,r_2)\cap
\Phi^{-1}(\vec{v})$ is a slice in $B(y,r_1)$, it is easy to see that we can
smoothly extend $\gamma_x$ beyond $T$ with the initial value $y$, in the
direction of $\nabla^Tu(y)$ --- breaking the supposed supremum of the existence
time.
\end{proof}

Continuing our discussion, we could now integrate the inequality above for all
$t>0$:
\begin{align}\label{eqn: value_change}
\begin{split}
u(\gamma_x(t))-u(x)\ =\ &\int_{0}^t\langle \nabla u, 
\dot{\gamma}_x(s)\rangle\ \text{d}s\\
=\ &\int_{0}^t\langle \nabla^T u, 
\dot{\gamma}_x(s)\rangle\ \text{d}s\\
\ge\
&\frac{1-e^{-2(1+\lambda^{-1}\sqrt{\Lambda k}C_0) Kr^{-2}t}}{
2(1+\lambda^{-1}\sqrt{\Lambda k}C_0) Kr^{-2}} \delta_0^2.
\end{split}
\end{align}

On the other hand, since $\Phi^{-1}(\vec{v})\subset B(x,4\varepsilon r)$, by
(\ref{eqn: small_fiber}) and (\ref{eqn: u_bounds}) we see that 
\begin{align}\label{eqn: fiber_size}
\forall y,y'\in \Phi^{-1}(\vec{v}),\quad |u(y)-u(y')|\ \le\ Kr^{-1} d_g(y,y') \ 
\le\ 2K\varepsilon.
\end{align}

Since $\gamma_x$ is the integral curve of a vector field tangent to
$\Phi^{-1}(\vec{v})$, $\gamma_x(t)\in \Phi^{-1}(\vec{v})$ for all $t\ge 0$,
combining (\ref{eqn: value_change}) and (\ref{eqn: fiber_size}) we get
the following upper bound for $t$:
\begin{align}\label{eqn: time_ub}
t\ \le\ \frac{\ln \delta_0^2-\ln\left(\delta_0^2
-4(1+\lambda^{-1}\sqrt{\Lambda k}C_0) K^2\varepsilon
r^{-2}\right)}{2(1+\lambda^{-1} \sqrt{\Lambda k}C_0) Kr^{-2}}.
\end{align} 

However, since we know (\ref{eqn: value_change}) is valid for all $t\ge 0$, the
right-hand side of (\ref{eqn: time_ub}) must be $\infty$, that is to say,
we have
\begin{align*}
\delta_0\ \le\ 2\left(1+\lambda^{-1}\sqrt{\Lambda k}C_0\right)^{\frac{1}{2}}
K\sqrt{\varepsilon}r^{-1}.
\end{align*}
This is the desired derivative bound.
\end{proof}

\begin{remark}\label{rmk: key_point}
In the proof of the \emph{a priori} estimate above, we notice that as long as
$B(p,2r)\subset M$ as smooth manifolds, and $\Phi:B(p,2r)\to \mathbb{R}^k$ is a
smooth map whose regular fibers are bounded in $B(p,2r)$, then the tangential
flow lines of a $C^2$ function $u$ is always defined for any $t>0$ within a
regular $\Phi$-fiber --- this is guaranteed, via Claim~\ref{clm:
extending_flow_line}, by the \emph{local} bounds of $J_k(\nabla \Phi)$,
$Hess_u$ and $Hess_{\Phi}$ around the fiber, and such bounds are in turn
guaranteed by the smoothness of $M$ and $\Phi$, and the compactness and
regularity of the fiber in consideration. The key point is that we \emph{do
not} need to assume any uniform bound on $J_k(\nabla \Phi)$, $Hess_u$ or
$Hess_{\Phi}$ to conclude the long-time existence of the dynamics driven by
$\nabla^Tu$ on any regular $\Phi$-fiber, and therefore we are free to
differentiate and integrate along the flows lines of $\nabla^Tu$.
\end{remark}

\section{The $L^2$-Average gradient control of the tangential derivatives}
In this section we prove our main estimate, Theorem~\ref{thm: main}. We will
begin with extending the fiber-wise estimate in Lemma~\ref{lem:
apriori_estimate} across all regular fibers of $\Phi$. In fact, once we know the
long-time existence of the flow lines of $\nabla^Tu$ on the regular fibers, an
integral version of the argument employed in Lemma~\ref{lem: apriori_estimate}
enables us to weaken the assumptions on $|Hess_u|$ and $|Hess_{\Phi}|$ to more
natural integral bounds:
\begin{prop}[Interior $L^2$ estimate for the tangential
gradients]\label{prop: interior_L2_estimate}
Fix $r\in (0,1)$ and $C_0\in (0,1)$. Let $B(p,4r)\subset M$ be a geodesic ball
in a smooth $m$ dimensional Riemannian manifold $(M,g)$. With the given $\Phi$
defined on $B(p,4r)$ as before, assuming besides (\ref{eqn: small_fiber}), that
$u\in C^{\infty}(B(p,2r))$ satisfies
\begin{align}\label{eqn: u_W22_bounds}
\sup_{B(p,2r)}\left(|u|^2+r^2|\nabla u|^2\right)
+r^4\fint_{B(p,2r)}\left|Hess_u\right|^2\ \le\ K^2,
\end{align}
and that $\Phi$ satisfies the estimates 
\begin{align}\label{eqn: Phi_W22_bounds}
\sup_{B(p,4r)}\max_{1\le a\le k}|\nabla \Phi^a|\ \le\ 1+C_0\quad\text{and}\quad
r^2\sum_{b=1}^k \fint_{B(p,4r)} \left|Hess_{\Phi^b}\right|^2\ \le\ C_0^2,
\end{align} 
then for $C_1=C_1(C_0,k)= 8k^2(1+C_0)^{k-1}$, we have
\begin{align}\label{eqn: interior_L2}
r^2\int_{\Phi^{-1}(\mathcal{R})}\left|\nabla^Tu\right|^2|J_k(\nabla \Phi)|\
\dvol_g\ \le\ C_1|B(p,r)|K^2\left(\sqrt{\varepsilon}+C_0\right),
\end{align}
where $|J_k(\nabla \Phi)|=\sqrt{\det J_k(\nabla \Phi)}$ is the Jacobian of the
map $\Phi$ (see (\ref{eqn: Jacobian_defn})), and $\mathcal{R}$ denotes the
regular values of $\Phi$ in $\Phi(B(p,r-4\varepsilon r))$.
\end{prop}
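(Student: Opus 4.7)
The plan is to extend the fiberwise \emph{a priori} estimate of Lemma~\ref{lem: apriori_estimate} in an $L^2$-averaged fashion, exchanging its sup-norm Hessian controls for the integral bounds \eqref{eqn: u_W22_bounds} and \eqref{eqn: Phi_W22_bounds} available here. The main device is the coarea formula
\[
r^2 \int_{\Phi^{-1}(\mathcal R)} |\nabla^T u|^2\, |J_k(\nabla\Phi)|\, dV_g\ =\ r^2 \int_{\mathcal R}\!\int_{\Phi^{-1}(\vec v)} |\nabla^T u|^2\, d\mathcal H^{m-k}\, d\vec v,
\]
which reduces matters to bounding fiberwise energies for each regular value of $\Phi$ and then re-assembling them against the global integrals on the ball.

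On each regular fiber $\Phi^{-1}(\vec v)$, let $\gamma_x$ denote the integral curve of $\nabla^T u$ starting at $x$; by Remark~\ref{rmk: key_point} it is defined for all $t\ge 0$ under the present hypotheses. Two pointwise ingredients from the proof of Lemma~\ref{lem: apriori_estimate} remain directly available: (i) the ``energy'' bound $\int_0^\infty |\nabla^T u|^2(\gamma_x(s))\, ds \le 2K\varepsilon$, from $\tfrac{d}{ds}u(\gamma_x(s)) = |\nabla^T u|^2(\gamma_x(s))$ together with \eqref{eqn: small_fiber} and \eqref{eqn: u_W22_bounds}; and (ii) the $x$-dependent exponential inequality $|\nabla^T u|^2(\gamma_x(t)) \ge |\nabla^T u|^2(x)\exp\!\bigl(-2\int_0^t B(\gamma_x(s))\,ds\bigr)$ coming from \eqref{eqn: tangential_u_derivative}, where $B(y)$ is the pointwise analogue of the coefficient in \eqref{eqn: derivative}, i.e.\ $B(y) \le |Hess_u|(y) + C(k)\,\|J_k^{-1}\|(y)\,|\nabla u|(y)\,|\nabla\Phi|(y)\,|Hess_\Phi|(y)$. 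Combining (i) and (ii) via $\int_0^T e^{-2\int_0^s B(\gamma_x(\tau))\,d\tau}\,ds \ge T\, e^{-2\int_0^T B(\gamma_x(\tau))\,d\tau}$ yields, for every $T>0$, the pointwise estimate
\[
|\nabla^T u|^2(x)\ \le\ \frac{2K\varepsilon}{T}\exp\!\left(2\int_0^T B(\gamma_x(s))\, ds\right).
\]

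Next I would integrate this inequality over $x$ on each fiber and then over $\vec v$ via coarea, transforming the right-hand side into a global integral of the form $\int_{B(p,r)}e^{2\|B\|_T}|J_k|\, dV$. A scaling analysis of $B$ (of typical size $K/r^2$) motivates the choice $T \sim r^2\sqrt{\varepsilon}/K$, which simultaneously produces the target $K^2\sqrt{\varepsilon}|B(p,r)|$ size from the prefactor $(2K\varepsilon/T)\cdot r^2$ while keeping the exponent $2\|B\|_T$ of order $\sqrt{\varepsilon}$, so the exponential is effectively linear. The resulting integral splits into a leading piece $\int|J_k|\,dV\le (1+C_0)^k|B(p,r)|$, supplying the $\sqrt{\varepsilon}$ contribution, and a correction of the form $\int B|J_k|\,dV$, which after Cauchy--Schwarz against \eqref{eqn: u_W22_bounds} and \eqref{eqn: Phi_W22_bounds} yields the $C_0$ contribution. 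The awkward factor $\|J_k^{-1}\|$ inside $B$ is absorbed by pairing it with the coarea weight via the cofactor identity $|J_k|^2\,J_k^{-1}=\mathrm{cof}(J_k)$, bounded in norm by $C(k)(1+C_0)^{2(k-1)}$ by Hadamard's inequality; this is precisely the source of the factor $(1+C_0)^{k-1}$ in $C_1$.

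The principal obstacle is reconciling the pointwise $\|J_k^{-1}\|$ in $B$, which diverges along near-singular fibers, with the coarea weight $|J_k|$ that degenerates there: controlling $\int \|B\|_T|J_k|\, dV$ (rather than just $\int \|B\|_T dV$) ultimately requires that the flow Jacobian be manageable, yet this Jacobian involves $\Delta_N u$, whose pointwise size is not \emph{a priori} bounded. The cofactor identity above is the cleanest manifestation of the natural pairing between $\|J_k^{-1}\|$ and $|J_k|$, but may still force a split of $B(p,r)$ into a region where $|J_k|$ is bounded below, on which the scheme runs directly, and a residual region controlled by the trivial bound $|\nabla^T u|\le |\nabla u|\le K/r$ leveraged against the smallness of $|J_k|$ itself. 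A secondary technical point is guaranteeing the validity of the exponential linearization pointwise in $x$ across each fiber --- not merely on average --- which may require a Chebyshev-type truncation on the set where $\|B\|_T$ is atypically large.
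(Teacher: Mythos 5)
Your strategy shares the right starting ingredients with the paper's proof---the long-time flow of $\nabla^T u$ on regular fibers, the smallness of the fibers from \eqref{eqn: small_fiber}, and the coarea weight $|J_k(\nabla\Phi)|$---but the order in which you assemble them creates a gap that I do not think the salvages you sketch can close.

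The core difficulty is where the factor $J_k(\nabla\Phi)^{-1}$ lives. You derive first a \emph{pointwise} Gronwall bound of the form
\[
|\nabla^T u|^2(x)\ \le\ \frac{2K\varepsilon}{T}\exp\!\Big(2\int_0^T B(\gamma_x(s))\,ds\Big),
\qquad B \lesssim |Hess_u| + \|J_k^{-1}\|\,|\nabla u|\,|\nabla\Phi|\,|Hess_\Phi|,
\]
and only afterwards integrate against $|J_k|\,\dvol_g$. But once $\|J_k^{-1}\|$ is inside the exponential, the single external factor of $|J_k|$ cannot tame it: you would need the product $|J_k|\cdot\|J_k^{-1}\|$ to be bounded, and it is not --- in diagonalizing coordinates this product is $\lambda_1^{-1/2}\sqrt{\lambda_2\cdots\lambda_k}$, which blows up as the smallest eigenvalue $\lambda_1\to 0$, which is exactly what can happen near singular fibers. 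The cofactor identity $|J_k|^2 J_k^{-1}=\mathrm{adj}(J_k)$ you invoke requires \emph{two} factors of $|J_k|$, and the coarea weight supplies only one; after applying it you are still left with $|J_k|^{-1}$. There is a second, more structural loss: in replacing $\langle\nabla\Phi^a,\nabla u\rangle$ by the crude $|\nabla\Phi|\,|\nabla u|$ when forming $B$, you discard a factor that in diagonalizing coordinates equals $|\nabla\phi^a|=\lambda_a^{1/2}$. This factor is exactly what the paper needs.

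The paper's proof avoids both problems by never forming a pointwise exponential and by keeping the weight $|J_k|$ attached to the integrand from the start. It differentiates $\int_{\Phi^{-1}(\mathcal R)}\bigl(|\nabla^T u|^2 |J_k|\bigr)(\gamma_x(t))\,\dvol_g(x)$ in $t$, which produces the exact multilinear quantities $F$ and $G$ of \eqref{eqn: key_terms}. Diagonalizing $J_k$ at each point by a constant $Q\in O(k)$, the dangerous sum $\sum_{a,b}J_k^{-1}{}_{ab}\langle\nabla\Phi^a,\nabla u\rangle Hess_{\Phi^b}(\cdot,\cdot)|J_k|$ collapses to $\sum_a c_a\,Hess_{\phi^a}(\cdot,\cdot)\prod_{b\ne a}\lambda_b^{1/2}$ with $|c_a|\le Kr^{-1}$: here $\lambda_a^{-1}$ from $J_k^{-1}$ is cancelled by $\lambda_a^{1/2}$ from $\langle\nabla\Phi^a,\nabla u\rangle$ and by $\lambda_a^{1/2}$ from $|J_k|=\sqrt{\lambda_1\cdots\lambda_k}$, leaving only \emph{positive} powers of eigenvalues, hence a bound by $(1+C_0)^{k-1}$. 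The resulting derivative lower bounds are then \emph{linear} (constant bounds, no Gronwall exponential), and integrating twice in $t$ against the bounded-oscillation quantity $\int u|J_k|(\gamma_x(t))\,\dvol_g$, followed by the optimization $t=\sqrt{\varepsilon}K^{-1}r^2$, gives \eqref{eqn: interior_L2}. Your Chebyshev-truncation and good-set/bad-set fallback would, at best, produce an error controlled by the measure where $|J_k|$ is degenerate, but the $L^2$-Hessian bound in \eqref{eqn: Phi_W22_bounds} alone does not obviously yield the $\sqrt{\varepsilon}+C_0$ rate through that route, and you would still need to fix the structural loss in $B$. I would redo the argument keeping $|J_k|$ inside the integrand from the beginning, as in the paper, and replace the crude bound on $B$ by the exact diagonalized form of $F$ and $G$.
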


\begin{proof}
 First notice that since $\mathcal{R}$ consists of regular values of
 $\Phi$ in $\Phi(B(p,r-4\varepsilon r))$, we must have
 $\Phi^{-1}(\mathcal{R})\subset B(p,r)$ by (\ref{eqn: small_fiber}). Moreover,
 each $\Phi$-fiber over $\mathcal{R}$ is a compact, embedded sub-manifold of
 $B(p,r)$. By the observation just discussed in Remark~\ref{rmk: key_point}, we
 know that the flow lines $\gamma_x(t)$ of $\nabla^Tu$, with initial values
 $x\in \Phi^{-1}(\mathcal{R})$ exists for all $t\ge 0$. It is also clear that
 $\Phi^{-1}(\mathcal{R})$, as a union of regular $\Phi$-fibers, each of which
 being invariant under the diffeomorphism (of the fiber) generated by
 $\nabla^Tu$, is itself invariant under the evolution driven by $\nabla^Tu$.
 
By (\ref{eqn: tangential_u_derivative}) we could compute for any $t>0$ to see
\begin{align}\label{eqn: integral_derivative}
\begin{split}
&\frac{\text{d}}{\text{d}t}\int_{\Phi^{-1}(\mathcal{R})}
\left(\left|\nabla^Tu\right|^2|J_k(\nabla \Phi)|\right)(\gamma_x(t))\
\dvol_g(x)\\
=\quad &\int_{\Phi^{-1}(\mathcal{R})} \left(\frac{\text{d}}{\text{d}t}
\left|\nabla^Tu\right|^2|J_k(\nabla \Phi)|+\left|\nabla^Tu\right|^2
\frac{\text{d}}{\text{d}t}|J_k(\nabla \Phi)| \right)(\gamma_x(t))\ 
\dvol_g(x)\\
=\quad &2\int_{\Phi^{-1}(\mathcal{R})}
 Hess_u(\dot{\gamma}_x(t),\dot{\gamma}_x(t))|J_k(\nabla \Phi)|(\gamma_x(t))\
 \dvol_g(x)\\
&-2\sum_{a,b=1}^{k}\int_{\Phi^{-1}(\mathcal{R})}
\left(J_k(\nabla\Phi)_{ab}^{-1} \langle \nabla \Phi^a,\nabla
u\rangle Hess_{\Phi^b}(\nabla^Tu,\nabla^Tu) |J_k(\nabla \Phi)|\right)
(\gamma_x(t))\ \dvol_g(x)\\
&+\sum_{a,b=1}^k\int_{\Phi^{-1}(\mathcal{R})}
\left(\left|\nabla^Tu\right|^2 J_k(\nabla \Phi)_{ab}^{-1}
 Hess_{\Phi^a}(\nabla^Tu,\nabla \Phi^b) |J_k(\nabla \Phi)|\right)(\gamma_x(t))\
\dvol_g(x).
\end{split}
\end{align}

In order to obtain a uniform estimate only depending on (\ref{eqn:
u_W22_bounds}) and (\ref{eqn: Phi_W22_bounds}), we need to analyze the following
(multi-) linear quantites 
\begin{align}\label{eqn: key_terms}
\begin{cases}
F(\nabla u,\nabla^Tu,\nabla^Tu)\ :=\ \sum_{a,b=1}^kJ_k(\nabla
\Phi)_{ab}^{-1}\langle \nabla \Phi^a,\nabla u\rangle
Hess_{\Phi^b}(\nabla^Tu,\nabla^Tu)|J_k(\nabla \Phi)|\\ \\
G(\nabla^Tu)\ :=\ \sum_{a,b=1}^kJ_k(\nabla
\Phi)_{ab}^{-1}Hess_{\Phi^a}(\nabla^Tu,\nabla \Phi^b) |J_k(\nabla \Phi)|
\end{cases}
\end{align}
at each $y=\gamma_x(t)\in \Phi^{-1}(\mathcal{R})$. We will rely on the
invariance of these sum under the orthogonal transformations.

To this end, fix some $y=\gamma_x(t)\in \Phi^{-1}(\mathcal{R})$, and let
$Q=[q_{ab}]\in O(k)$ be an orthogonal matrix that diagonalizes $J_k(\nabla
\Phi)(y)$: the specific value of $Q$ depends on $y\in \Phi^{-1}(\mathcal{R})$
but $Q$ is a \emph{constant} matrix. We denote $\phi^a
=\sum_{b=1}^kq_{ab}\Phi^b$, and $\phi=[\phi^1,\ldots, \phi^k]^t$ (the transpose
of the row vector), then
\begin{align*}
J_k(\nabla \phi)\ =\ 
\begin{bmatrix}
\nabla \phi^1\\  \vdots\\ \nabla \phi^k
\end{bmatrix}\cdot [\nabla \phi^1,\ldots,\nabla \phi^k]\ 
=\ Q\begin{bmatrix}\nabla \Phi^1\\ \vdots\\
\nabla \Phi^k\end{bmatrix}\cdot [\nabla \Phi^1,\ldots,\nabla \Phi^k]Q^t\
=\ QJ_k(\nabla \Phi)Q^t,
\end{align*}
where the dot ``\ $\cdot$\ '' denotes the inner product of vector fields with
respect to the metric tensor $g$. Notice that since $Q$ is a constant matrix and
each $\phi^b$ is a constant linear combanition of $\Phi^1,\ldots,\Phi^k$, the
covariant derivatives land directly on $\Phi^1,\ldots,\Phi^k$ --- besides
$\nabla \phi^a=\sum_{a=1}^kq_{ab}\nabla \Phi^b$, we also have
$Hess_{\phi^a}=\sum_{b=1}^kq_{ab}Hess_{\Phi^b}$.

Now suppose $J_k(\nabla \phi)(y)$ has eigenvalues $\lambda_1\le \lambda_2\le
\ldots\le \lambda_k$ on the diagonal, then for each $a=1,\ldots,k$, we have
$\lambda_a=|\nabla \phi^a|^2(y)$. With the notation $c_a:=\langle \nabla
u,\frac{\nabla \phi^a}{|\nabla \phi^a|}\rangle$, we see $|c_a|\le Kr^{-1}$. Also
notice that by the invariance of the determinant, we have $\det J_k(\nabla
\Phi)=\det J_k(\nabla \phi)= \lambda_1\lambda_2\cdots \lambda_k$. Now by
orthogonality of $Q$ we can compute
\begin{align}\label{eqn: computation_1}
\begin{split}
&\sum_{a,b=1}^k J_k(\nabla \Phi)_{ab}^{-1}\langle \nabla\Phi^a,\nabla u\rangle
Hess_{\Phi^b}(\nabla^Tu,\nabla^Tu)|J_k(\nabla \Phi)|\\
=\quad &[\langle \nabla\Phi^1,\nabla u\rangle,\ldots,\langle \nabla
\Phi^k,\nabla u\rangle] Q^t 
\begin{bmatrix} \lambda_1^{-1}& &\\
  & \ddots&\\
  &       &\lambda_k^{-1}
\end{bmatrix}Q\begin{bmatrix}
Hess_{\Phi^1}(\nabla^Tu,\nabla^Tu)\\ \vdots \\
Hess_{\Phi^k}(\nabla^Tu,\nabla^Tu)
\end{bmatrix}\sqrt{\lambda_1\cdots \lambda_k}\\
=\quad &[\langle \nabla\phi^1,\nabla u\rangle,\ldots,\langle \nabla
\phi^k,\nabla u\rangle] \begin{bmatrix} \lambda_1^{-1}& &\\
 & \ddots&\\
 &   &\lambda_k^{-1}
\end{bmatrix} 
\begin{bmatrix}
Hess_{\phi^1}(\nabla^Tu,\nabla^Tu)\\ \vdots \\
Hess_{\phi^k}(\nabla^Tu,\nabla^Tu)
\end{bmatrix}\sqrt{\lambda_1\cdots \lambda_k}\\
=\quad &\sum_{a=1}^k c_a\lambda^{-\frac{1}{2}}_a
Hess_{\phi^a}(\nabla^Tu,\nabla^Tu)\sqrt{\lambda_1\cdots
\lambda_k}\\
=\quad
&\sum_{a=1}^kc_aHess_{\phi^a}(\nabla^Tu,\nabla^Tu)
\prod_{b\not=a}\lambda_b^{\frac{1}{2}}.
\end{split}
\end{align}
To estimate the last line above, we notice that all terms come with a
\emph{positive} power of $\lambda_1,\ldots,\lambda_k$ --- this is crucial for
us, as we \emph{do not} have any uniformly positive lower bound of
$\lambda_1$ when $y\in \Phi^{-1}(\mathcal{R})$ varies. We will also rely on the
assumption (\ref{eqn: Phi_W22_bounds}), the linearity of taking covariant
derivatives, as well as the fact that $Q\in O(k)$ to see that for each
$a=1,\ldots,k$,
\begin{align*}
\lambda^{\frac{1}{2}}_a\ =\ |\nabla \phi^a|(y)\ \le\ \max_{1\le b\le
k}|\nabla \Phi^b|(y)\ \le\ 1+C_0\quad 
\text{and}\quad \left|Hess_{\phi^a}\right|(y)\ \le\ \max_{1\le b\le k}
\left|Hess_{\Phi^b}\right|(y).
\end{align*}
Notice that the right-hand sides of the above inequalities are independent of
the specific matrix $Q\in O(k)$, and they are also independent of the choice of
$y\in \Phi^{-1}(\mathcal{R})$ ---  we obtain the following
\emph{uniform} estimate across $\Phi^{-1}(\mathcal{R})$:
\begin{align}\label{eqn: nabla_u_perp}
\begin{split}
\left|F(\nabla u, \nabla^Tu,\nabla^Tu)\right|\ \le\ 
k(1+C_0)^{k-1}K^3r^{-3}\sum_{b=1}^k\left|Hess_{\Phi^b}\right|.
\end{split}
\end{align}

In the same setting as above, we have at $y\in \Phi^{-1}(\mathcal{R})$ the
matrix $Q=[q_{ab}]\in O(k)$ to help compute
\begin{align}\label{eqn: computation_2}
\begin{split}
&\sum_{a,b=1}^kJ(\nabla \Phi)_{ab}^{-1}Hess_{\Phi^a}(\nabla^Tu,\nabla
\Phi^b) |J_k(\nabla \Phi)|\\
=\quad &\sum_{a,b=1}^k\sum_{c=1}^kq_{ca}\lambda_c^{-1}q_{cb}
 Hess_{\Phi^a}(\nabla^Tu,\nabla \Phi^b) \sqrt{\lambda_1\cdots \lambda_k}\\
=\quad &\sum_{a=1}\lambda^{-\frac{1}{2}}_a Hess_{\phi^a}(\nabla^Tu, \nabla
\phi^a)\prod_{b\not=a}\lambda_b^{\frac{1}{2}},
\end{split}
\end{align}
and therefore by $\lambda_a^{-\frac{1}{2}}|\nabla \phi^a|=1$ we could estimate
as before to see
\begin{align}\label{eqn: Jacobian_derivative}
\left|G(\nabla^Tu)\right|\ \le\
k(1+C_0)^{k-1}Kr^{-1}\sum_{b=1}^k\left|Hess_{\Phi^b}\right|,
\end{align}
as an estimate holds \emph{uniformly} across $\Phi^{-1}(\mathcal{R})$.

Recalling (\ref{eqn: integral_derivative}), we could now control the
integral by integrating the above estimates (\ref{eqn: nabla_u_perp}) and
(\ref{eqn: Jacobian_derivative}) across $\Phi^{-1}(\mathcal{R})$ as following:
\begin{align}\label{eqn: integral_derivative_lb1}
\begin{split}
&\frac{\text{d}}{\text{d}t}\int_{\Phi^{-1}(\mathcal{R})}
\left(\left|\nabla^Tu\right|^2|J_k(\nabla \Phi)|\right)(\gamma_x(t))\
\dvol_g(x)\\
\ge \quad &-2(1+C_0)^{k}K^2r^{-2}
\int_{\Phi^{-1}(\mathcal{R})}\left|Hess_u\right|(\gamma_x(t))\ \dvol_g(x)\\
&-3k(1+C_0)^{k-1}K^3r^{-3}\int_{\Phi^{-1}(\mathcal{R})}
\sum_{b=1}^k \left|Hess_{\Phi^b}\right|(\gamma_x(t))\ \dvol_g(x).
\end{split}
\end{align}
To bound the last two integrals, we notice that $\Phi^{-1}(\mathcal{R})$ is
invariant under the flow of $\nabla^Tu$, meaning that these integrals are the
same as the ones in (\ref{eqn: u_W22_bounds}) and (\ref{eqn: Phi_W22_bounds}),
restricted on the subset $\Phi^{-1}(\mathcal{R})$. Therefore we arrive at the
following lower bound:
\begin{align}\label{eqn: integral_derivative_lb2}
\frac{\text{d}}{\text{d}t}\int_{\Phi^{-1}(\mathcal{R})}
\left(\left|\nabla^Tu\right|^2|J_k(\nabla \Phi)|\right)(\gamma_x(t))\
\dvol_g(x)\ \ge\ -4k^2(1+C_0)^k|B(p,r)|K^3r^{-4}.
\end{align}

Integrating the above inequality in $t$, we see for any $t>0$ fixed,
\begin{align}\label{eqn: integral_derivative_lb3}
\begin{split}
&\int_{\Phi^{-1}(\mathcal{R})}\left(\left|\nabla^Tu\right|^2|J_k(\nabla
\Phi)|\right)(\gamma_x(t))\ \dvol_g(x)\\
\ge\quad &\int_{\Phi^{-1}(\mathcal{R})}\left|\nabla^Tu\right|^2(x)
|J_k(\nabla \Phi)|(x)\ \dvol_g(x) -4k^2(1+C_0)^k|B(p,r)| K^3 r^{-4}t.
\end{split}
\end{align}

Now we computate the variation of $\int u|J_k(\nabla \Phi)|$ driven by
$\nabla^Tu$:
\begin{align}\label{eqn: integral_variation_u}
\begin{split}
&\frac{\text{d}}{\text{d}t}\int_{\Phi^{-1}(\mathcal{R})}\left(u|J_k(\nabla
\Phi)|\right)(\gamma_x(t))\ \dvol_g(x)\\
=\quad &\int_{\Phi^{-1}(\mathcal{R})}\left(\left|\nabla^Tu\right|^2|J_k(\nabla
\Phi)|+uG(\nabla^Tu)\right) (\gamma_x(t))\ \dvol_g(x)
\end{split}
\end{align}
and by (\ref{eqn: u_W22_bounds}), (\ref{eqn: Phi_W22_bounds}), (\ref{eqn:
Jacobian_derivative}) and (\ref{eqn: integral_derivative_lb3}) we can estimate
\begin{align}\label{eqn: integral_variation_u_lb}
\begin{split}
&\frac{\text{d}}{\text{d}t}\int_{\Phi^{-1}(\mathcal{R})}\left(u|J_k(\nabla
\Phi)|\right)(\gamma_x(t))\ \dvol_g(x)\\
\ge\quad
&\int_{\Phi^{-1}(\mathcal{R})}\left|\nabla^Tu\right|^2(x)
|J_k(\nabla \Phi)|(x)\ \dvol_g(x) -4k^2(1+C_0)^k|B(p,r)| K^3
r^{-4}t\\
&-k(1+C_0)^{k-1}Kr^{-1} \int_{\Phi^{-1}(\mathcal{R})} u\sum_{b=1}^k
\left|Hess_{\Phi^b}\right|(\gamma_x(t))\ \dvol_g(x)\\
\ge\quad &\int_{\Phi^{-1}(\mathcal{R})}\left|\nabla^Tu\right|^2(x)
|J_k(\nabla \Phi)|(x)\ \dvol_g(x) -4k^2(1+C_0)^k|B(p,r)| K^3 r^{-4}t\\
&-k^2(1+C_0)^{k-1}|B(p,r)|K^2C_0r^{-2}.
\end{split}
\end{align} 

Integrating the last inequality in $t$ once again we see for any $t>0$,
\begin{align}\label{eqn: change_u_lb}
\begin{split}
&\int_{\Phi^{-1}(\mathcal{R})}\left(u |J_k(\nabla \Phi)|\right)(\gamma_x(t))\
\dvol_g(x)-\int_{\Phi^{-1}(\mathcal{R})}u(x)|J_k(\nabla \Phi)|(x)\ \dvol_g(x)\\
 \ge\quad  &t\int_{\Phi^{-1}(\mathcal{R})}\left|\nabla^Tu\right|^2(x)
 |J_k(\nabla\Phi)|(x)\ \dvol_g(x)-2k^2(1+C_0)^k |B(p,r)|K^3r^{-4}t^2\\
 &-k^2(1+C_0)^{k-1}|B(p,r)|K^2C_0r^{-2}t.
 \end{split}
 \end{align}

On the other hand, by the uniform gradient controls (\ref{eqn: u_W22_bounds})
and (\ref{eqn: Phi_W22_bounds}), we have $\left|\nabla^T u\right|\le Kr^{-1}$
and $|J_k(\nabla \Phi)|\le (1+C_0)^k$ on $\Phi^{-1}(\mathcal{R})$, and by the small
fiber assumption (\ref{eqn: small_fiber}), we know that
$d_g(\gamma_x(t),\gamma_x(0))\le 2\varepsilon r$ for any $t>0$ and initial value
$x\in \Phi^{-1}(\mathcal{R})$. Therefore
\begin{align*}
\forall x\in \Phi^{-1}(\mathcal{R}),\ \forall t>0,\quad
|u(\gamma_x(t))-u(x)||J(\nabla \Phi)|(\gamma_x(t))\ \le\
2(1+C_0)^kK\varepsilon.
\end{align*}

Moreover, since for any smooth curve $\gamma$, we have
\begin{align*}
\left|\nabla_{\dot{\gamma}}|J_k(\nabla \Phi)|\right|\ =\ |G(\dot{\gamma})|\ \le\
k(1+C_0)^{k-1}|\dot{\gamma}|\sum_{b=1}^k\left|Hess_{\Phi^b}\right|,
\end{align*}
which is obtained following the same path leading to (\ref{eqn:
Jacobian_derivative}). Now fixing some $t>0$, we see that almost every pair of
points $(x,\gamma_x(t))$ with $x\in \Phi^{-1}(\mathcal{R})$ could be connected
by a unique minimal geodesic $\sigma_{x,\gamma_x(t)}$ of speed
$d(x,\gamma_x(t))$, whose image is entirely contained in $B(p,r)$. Therefore,
integrating the inequality
\begin{align*}
\left||J_k(\nabla \Phi)|(\gamma_x(t))-|J_k(\nabla \Phi)|(x)\right|\ \le\
k(1+C_0)^{k-1}d(x,\gamma_x(t))\sum_{b=1}^k\int_0^1\left|Hess_{\Phi^b}\right|
(\sigma_{x,\gamma_{x}(t)}(s))\ \text{d}s
\end{align*} 
in $x\in \Phi^{-1}(\mathcal{R})$, we see that
\begin{align*}
&\int_{\Phi^{-1}(\mathcal{R})}\left||J_k(\nabla
\Phi)|(\gamma_x(t))-|J_k(\nabla \Phi)|(x)\right|\ \dvol_g(x)\\
 \le\quad
&2\varepsilon r k(1+C_0)^{k-1}\sum_{b=1}^k \int_{B(p,r)}
\left|Hess_{\Phi^b}\right|\\
\le\quad &2\varepsilon k(1+C_0)^{k}|B(p,r)|.
\end{align*}
Combining the above estimates we see for and fixed $t>0$ that
\begin{align}\label{eqn: change_integral_ub}
\begin{split}
&\left|\int_{\Phi^{-1}(\mathcal{R})}\left(u|J_k(\nabla
\Phi)|\right)(\gamma_x(t))\
\dvol_g(x)-\int_{\Phi^{-1}(\mathcal{R})}u(x)|J_k(\nabla \Phi)|(x)\
\dvol_g(x)\right|\\
 \le\quad &4k(1+C_0)^k|B(p,r)|K\varepsilon.
\end{split}
\end{align}

Now recalling the previous lower bound (\ref{eqn: change_u_lb}), we obtain the
following estimate for any $t\ge 0$:
\begin{align*}
&\int_{\Phi^{-1}(\mathcal{R})}\left|\nabla^Tu\right|^2 
|J_k(\nabla \Phi)|\ \dvol_g\\
\le\quad &\left(4t^{-1}\varepsilon +2tK^2r^{-4}\right)k^2(1+C_0)^k|B(p,r)|K
+k^2(1+C_0)^{k-1}|B(p,r)|K^2C_0r^{-2}.
\end{align*}

Now choosing $t=\sqrt{\varepsilon }K^{-1}r^2$, we immediately have
\begin{align*}
r^2\int_{\Phi^{-1}(\mathcal{R})}\left|\nabla^Tu\right|^2
|J_k(\nabla \Phi)|\ \dvol_g\ \le\
16k^2(1+C_0)^{k-1} |B(p,r)|K^2\left(\sqrt{\varepsilon}+C_0\right),
\end{align*}
whence the desired estimate (\ref{eqn: interior_L2}).
\end{proof}

\begin{remark}
It is necessary to integrate against $|J_k(\nabla \Phi)|$ in (\ref{eqn:
interior_L2}), and this is in correspondence with the measured Gromov-Hausdorff
convergence.
\end{remark} 

\begin{remark}
The lack of a uniform positive lower bound of $\lambda$ is also a crucial
problem even in the non-collapsing setting, see especially the Transformation
Thoerem of Cheeger and Naber~\cite[Theorem 1.32]{ChNa14}, which is the major
technical input of \cite{ChNa14} towards their solution of the Codimension Four
Conjecture. The invariance of the canonical quantities related to $J_k(\nabla
\Phi)$, under the action of the orthogonal group, has also been explored in
\cite{HW18} to understand the asymptotic behavior of $|J_k(\nabla \Phi)|$ on
complete non-compact manifolds with non-negative Ricci curvature.
\end{remark}

In order to apply our previous estimate in Proposition~\ref{prop:
interior_L2_estimate}, we will extend the domain of integral across the
singular fibers of $\Phi$, while still end up with the same estimates. As
discussed in the introduction, the situation we consider is a (volume)
collapsing sequence of $m$ dimensional Riemannian manifolds with a uniform
Ricci curvature lower bound.


Besides the desired estimates in Theorem~\ref{thm: almost_splitting} that the
$\Psi(\varepsilon)$-splitting map $\Phi$ satisfies, one key property is that
$\Phi$ is actually a \emph{harmonic map}. If the domain of $\Phi$ is
containted in an Einstein manifold, then around any point we could write down
the equations $\Delta_g \Phi^a=0$ ($a=1,\ldots,k$) under the harmonic
coordinates. Under such coordinates, the components of $g$ are real analytic and 
determine the coefficients of $\Delta_g$, implying that each $\Phi^a$
($a=1,\ldots,k$) is an analytic function. Therefore $\Phi$ is an analytic map
once we assume its domain is in an Einstein manifold.
To extend the estimate in Proposition~\ref{prop: interior_L2_estimate} across
the singular fibers of $\Phi$, let us recall the following fact for analytic
maps (see \cite[\S 3.1, Exercise 4(a)]{Hirsch}):
\begin{lemma}[Nullity of singular fibers]\label{lem: null_singular_fiber}
Suppose $\Phi:M\to N$ is an analytic map, whose domain is connected and has
dimension no less than that of the codomain. Let $\Sigma\subset M$ denote the
set of singular points of $\Phi$, then $\Phi^{-1}(\Phi(\Sigma))$ has measure
zero in $M$.
\end{lemma}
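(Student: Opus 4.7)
The plan is to combine the analytic structure of $\Sigma$ (from the analyticity of $\Phi$) with Sard's theorem and the local submersion form of $\Phi$ on the regular locus $M\setminus \Sigma$.

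First I would observe that, with $n=\dim N$, the singular set $\Sigma$ is exactly the zero locus of all $n\times n$ minors of the Jacobian matrix of $\Phi$ in any analytic chart. Since $\Phi$ is analytic, these minors are analytic functions, so $\Sigma$ is an analytic subvariety of $M$. Under the (implicit) nondegeneracy assumption that $\Phi$ is not everywhere singular---which in our intended application is guaranteed by the almost-splitting properties (iii) and (iv) of Theorem~\ref{thm: almost_splitting}, forcing $\det J_k(\nabla\Phi)$ to be nonzero on a set of positive measure---the connectedness of $M$ combined with the identity principle for real analytic functions shows that $\Sigma$ is a proper analytic subvariety of $M$, hence has Lebesgue measure zero.

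Next, since $\dim M \ge \dim N$ and $\Phi$ is at least $C^1$, Sard's theorem applies and gives that the critical value set $\Phi(\Sigma)$ has Lebesgue measure zero in $N$. It remains to promote this to the statement that the preimage $\Phi^{-1}(\Phi(\Sigma))$ has measure zero in $M$. Writing
\begin{equation*}
\Phi^{-1}(\Phi(\Sigma))\ =\ \Sigma\ \cup\ \bigl(\Phi^{-1}(\Phi(\Sigma))\setminus \Sigma\bigr),
\end{equation*}
the first piece is already measure zero by the previous step, so I only need to control the second piece. On $M\setminus\Sigma$ the map $\Phi$ is a submersion, so the submersion theorem provides, around each point, an analytic chart in which $\Phi$ takes the form of projection onto the first $n$ coordinates. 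In such a local product chart $U\cong V\times W$, the preimage of a measure zero set $E\subset \Phi(U)\subset N$ is $E\times W$, which has measure zero by Fubini. Covering $M\setminus \Sigma$ by countably many such submersion charts (using second countability of $M$) and applying this to $E = \Phi(\Sigma)$ yields that $\Phi^{-1}(\Phi(\Sigma))\setminus \Sigma$ is a countable union of measure zero sets, hence measure zero.

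The main obstacle is really the passage from ``$\Phi(\Sigma)$ has measure zero in $N$'' to ``$\Phi^{-1}(\Phi(\Sigma))$ has measure zero in $M$''---this step, unlike the direct application of Sard, genuinely uses that $\Phi$ is a submersion off of $\Sigma$ together with the fact that $\Sigma$ itself is negligible. The analyticity of $\Phi$ enters in exactly two places: to guarantee that $\Sigma$ is an analytic subvariety (hence of measure zero when proper) and to justify the nondegeneracy case distinction via the identity principle. A degenerate case where $\Sigma = M$ would make the statement vacuous in the relevant sense, but is excluded in our setting where the almost-splitting map has generically full rank.
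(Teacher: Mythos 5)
The paper does not actually prove this lemma --- it cites it as a known fact from Hirsch \cite[\S 3.1, Exercise 4(a)]{Hirsch} --- so there is no in-paper argument to compare against. Your blind derivation is a correct and essentially standard proof of the cited fact. The decomposition $\Phi^{-1}(\Phi(\Sigma)) = \Sigma \cup \bigl(\Phi^{-1}(\Phi(\Sigma))\setminus\Sigma\bigr)$ is the right one: $\Sigma$ is the common zero set of the $n\times n$ minors of the Jacobian, hence a real-analytic subvariety, and a \emph{proper} analytic subvariety of a connected analytic manifold has Lebesgue measure zero; $\Phi(\Sigma)$ has measure zero by Sard (applicable since $\Phi$ is $C^\infty$ and $\dim M\ge\dim N$); and $\Phi^{-1}(\Phi(\Sigma))\setminus\Sigma$ is handled by the local normal form of a submersion together with Fubini and a countable cover by submersion charts. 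You are also right to flag that, as literally worded, the statement is false in the degenerate case $\Sigma=M$ (e.g.\ $\Phi$ constant), and that this case is excluded in the paper's application because the almost-splitting estimate (iii) of Theorem~\ref{thm: almost_splitting} forces $J_k(\nabla\Phi)$ to be close to the identity on a set of positive measure, so the relevant minor is not identically zero and the identity principle gives properness of $\Sigma$. The only minor point worth spelling out, if you wanted a self-contained writeup, is that ``nontrivial real-analytic zero set has measure zero'' is itself not completely immediate from the identity principle alone (it needs a Fubini/slicing or stratification argument), but it is a standard fact and correctly invoked.
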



Applying a standard technique due to Cheeger and Colding we could further relax
the assumption of $\|Hess_u\|_{L^2}$ to a local integral bound of $\Delta u$ in
the following 
\begin{theorem}\label{thm: Ricci_tangential_L2} 
With the same setting as in Theorem~\ref{thm: almost_splitting}, we
further assume that $g$ is real analytic. Let $u$ be a smooth function on
$B(p,2r)$ with
\begin{align}\label{eqn: u_C1_bound}
\sup_{B(x,2r)}|u|+r|\nabla u|\ \le\ K,
\end{align}
then there is some positive constant $C_2(m,k)$, independent of
$\varepsilon\in (0,\varepsilon(m))$ and $r\in (0,1)$, such that
\begin{align}
r^2\fint_{B(p,r)}|\nabla^Tu|^2\ \dvol_g\ \le\
C_2(m,k)\left(\varepsilon^{\frac{1}{2}}+\Psi(\varepsilon,l^{-1}|m)\right)
\left(K^2+\|\Delta u\|_{\bar{L}^2(B(p,2r))}Kr^2\right),
\end{align}
where the $\bar{L}^2$ norm denotes the $L^2$-average, and 
$\Psi(\varepsilon,l^{-1}|m)$ is the same as the one obtained in
Theorem~\ref{thm: almost_splitting}.
\end{theorem}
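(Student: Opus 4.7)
The plan is to reduce the statement to Proposition~\ref{prop: interior_L2_estimate} by converting the $C^1$-hypothesis (\ref{eqn: u_C1_bound}) together with an $\bar{L}^2$-bound on $\Delta u$ into the local $L^2$-Hessian control demanded in (\ref{eqn: u_W22_bounds}), and then to pass from the weighted integral over $\Phi^{-1}(\mathcal{R})$ in (\ref{eqn: interior_L2}) to an unweighted $L^2$-average on $B(p,r)$. Starting from Bochner's formula
\[
\tfrac{1}{2}\Delta|\nabla u|^2 \;=\; |Hess_u|^2 + \mathrm{Ric}(\nabla u,\nabla u) + \langle \nabla u, \nabla \Delta u\rangle,
\]
I would multiply by $\eta^2$ with $\eta\equiv 1$ on $B(p,r)$, $\supp\eta\subset B(p,2r)$, and $|\nabla\eta|,\ r|\Delta\eta|\le Cr^{-1}$, and integrate by parts twice. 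Using (\ref{eqn: u_C1_bound}), the Ricci lower bound implicit in the almost-splitting setting, and Bishop--Gromov to dominate $|B(p,2r)|/|B(p,r)|$, this yields
\[
r^4 \fint_{B(p,r)} |Hess_u|^2\, \dvol_g \;\le\; C(m)\Bigl(K^2 + K r^2 \|\Delta u\|_{\bar{L}^2(B(p,2r))} + r^4 \|\Delta u\|_{\bar{L}^2(B(p,2r))}^2\Bigr),
\]
which, setting $K_1 := K + r^2\|\Delta u\|_{\bar{L}^2(B(p,2r))}$, is bounded by $C(m)K_1^2$; the hypothesis (\ref{eqn: u_W22_bounds}) then holds with $K$ replaced by a constant multiple of $K_1$.

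Parts (2) and (4) of Theorem~\ref{thm: almost_splitting} supply (\ref{eqn: Phi_W22_bounds}) with $C_0=\Psi(\varepsilon,l^{-1}|m)$, so Proposition~\ref{prop: interior_L2_estimate} yields
\[
r^2 \int_{\Phi^{-1}(\mathcal{R})} |\nabla^T u|^2 |J_k(\nabla \Phi)|\, \dvol_g \;\le\; C(m,k)\, K_1^2\, |B(p,r)|\, \bigl(\sqrt{\varepsilon} + \Psi(\varepsilon, l^{-1}|m)\bigr).
\]
To replace $\Phi^{-1}(\mathcal{R})$ by $B(p,r)$, I would invoke the analyticity of $g$: in harmonic coordinates the components of $g$ are analytic, so each $\Phi^a$ satisfies $\Delta_g\Phi^a=0$ with analytic coefficients and is therefore itself real analytic, making $\Phi$ an analytic map. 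Lemma~\ref{lem: null_singular_fiber} then guarantees that $\Phi^{-1}(\Sigma)$ has $g$-volume zero, while the thin annular shell $B(p,r)\setminus\Phi^{-1}(\Phi(B(p,r-4\varepsilon r)))$ has volume $O(\varepsilon)|B(p,r)|$ by Bishop--Gromov.

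Finally, I would remove the Jacobian weight by writing
\[
\int_{B(p,r)} |\nabla^T u|^2\, \dvol_g \;=\; \int_{B(p,r)} |\nabla^T u|^2 |J_k(\nabla\Phi)|\, \dvol_g + \int_{B(p,r)} |\nabla^T u|^2 \bigl(1-|J_k(\nabla\Phi)|\bigr)\, \dvol_g,
\]
and controlling the second integral via the pointwise estimate $|\nabla^T u|\le|\nabla u|\le Kr^{-1}$ together with the $L^1$-closeness $\fint_{B(p,r)}\bigl|1-|J_k(\nabla\Phi)|\bigr|\le C(m,k)\,\Psi(\varepsilon, l^{-1}|m)$, which follows from parts (2) and (3) of Theorem~\ref{thm: almost_splitting}. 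Combining everything and dividing by $|B(p,r)|$ yields the desired estimate. The main obstacle is precisely this last step: because $|J_k(\nabla\Phi)|$ can degenerate on a set of positive measure, there is no uniform way to invert the weight, so one must trade against the a priori pointwise gradient bound on $u$, exploiting only the $L^1$-closeness of $J_k(\nabla\Phi)$ to the identity rather than any pointwise nondegeneracy.
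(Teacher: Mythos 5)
Your overall architecture matches the paper's: Bochner plus a Cheeger--Colding cutoff to control the Hessian from $\|\Delta u\|_{\bar L^2}$; Lemma~\ref{lem: null_singular_fiber} via analyticity to make the singular fibers null; Proposition~\ref{prop: interior_L2_estimate} for the tangential $L^2$-bound; and then the passage from the $|J_k(\nabla\Phi)|$-weighted integral to the unweighted average using the $L^1$-closeness of $J_k(\nabla\Phi)$ to the identity together with the pointwise bound $|\nabla^T u|\le Kr^{-1}$. Your explicit weight-removal step is in fact a useful clarification, since the displayed inequality at the very end of the paper's proof still carries $|J_k(\nabla\Phi)|$.

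However, there is a genuine quantitative gap in the middle step. You feed the Hessian estimate into Proposition~\ref{prop: interior_L2_estimate} as a black box by enlarging $K$ to $K_1 := K + r^2\|\Delta u\|_{\bar L^2}$. Because the hypothesis (\ref{eqn: u_W22_bounds}) packages the $C^0$, $C^1$, and $L^2$-Hessian bounds under a single constant $K$, the conclusion (\ref{eqn: interior_L2}) then scales as $K_1^2 \sim K^2 + r^4\|\Delta u\|_{\bar L^2}^2$, which is strictly larger than the claimed $K^2 + K r^2\|\Delta u\|_{\bar L^2}$ (by AM--GM the latter is $\le \tfrac{3}{2}K^2 + \tfrac{1}{2}r^4\|\Delta u\|^2$, and the reverse fails when $K \ll r^2\|\Delta u\|$). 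So your argument proves a weaker estimate than the one stated. The paper avoids this by \emph{not} invoking Proposition~\ref{prop: interior_L2_estimate} directly: it re-opens its proof at (\ref{eqn: integral_derivative_lb1}), where only the $L^1$-average $\fint|Hess_u|$ appears, and substitutes the \emph{linear-in-$\|\Delta u\|$} bound
$\fint_{B(p,r)}|Hess_u| \le 4mC_{ctf}(m)Kr^{-2} + 2\|\Delta u\|_{\bar L^2(B(p,2r))}$.
Because in (\ref{eqn: integral_derivative_lb1}) this $L^1$-Hessian quantity is already multiplied by the prefactor $K^2 r^{-2}$ coming from $|\nabla^T u|^2$, the resulting contribution is $K^3 r^{-4} + K^2 r^{-2}\|\Delta u\|$, so that after the same optimization $t = \sqrt{\varepsilon}K^{-1}r^2$ the $\|\Delta u\|$-term remains linear, yielding $K^2 + Kr^2\|\Delta u\|$ as stated. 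To repair your argument, either carry out the paper's substitution at the level of (\ref{eqn: integral_derivative_lb1}), or first strengthen Proposition~\ref{prop: interior_L2_estimate} so that its hypotheses split the $C^1$-bound and the $L^1$-average Hessian bound into two independent constants.

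One further small point: (\ref{eqn: interior_L2}) integrates only over $\Phi^{-1}(\mathcal{R})$ with $\mathcal{R}\subset\Phi(B(p,r-4\varepsilon r))$, so your Bishop--Gromov shell argument is needed to cover $B(p,r)\setminus\Phi^{-1}(\mathcal{R})$; you state this correctly, but note that the paper silently redefines $\mathcal{R}$ in the proof of Theorem~\ref{thm: Ricci_tangential_L2} to be the regular values in $\Phi(B(p,r))$ and invokes (\ref{eqn: full_measure}) directly, so the two presentations handle this boundary effect slightly differently.
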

\begin{proof}
Let $\mathcal{R}\subset \Phi(B(p,r))$ be the regular values of $\Phi$, then by 
Lemma~\ref{lem: null_singular_fiber} we clearly have
\begin{align}\label{eqn: full_measure}
|\Phi^{-1}(\mathcal{R})|\ =\ |B(p,r)|.
\end{align}
Moreover, the assumption (\ref{eqn: Phi_W22_bounds}) of $\Phi$ in
Proposition~\ref{prop: interior_L2_estimate} is satisfied with
$C_0=\Psi=\Psi(\varepsilon,l^{-1}|m)$.

We only need to further control $\|Hess_u\|_{L^2(B(p,r))}$ following
Cheeger and Colding's well-known argument based on their construction of a 
controlled cut-off function $\varphi$ supported on $B(p,2r)$, such that
$\varphi\equiv 1$ within $B(p,r+4\varepsilon r)$ and $r|\nabla
\varphi|+r^2|\Delta \varphi| \le 2C_{ctf}(m)$. Testing the Weitzenb\"ock
formula applied to $u$ against $\varphi$ on $B(p,2r)$, and applying integration
by parts and H\"older's inequality we see
\begin{align*} 
\fint_{B(p,2r)} \varphi |Hess_u|^2\ \le\ &\frac{1}{2}\int_{B(p,2r)} 
(|\Delta \varphi|+2\lambda (m-1))|\nabla u|^2
+\int_{B(p,2r)}(\Delta u)^2\varphi  +|\Delta u||\langle \nabla
\varphi,\nabla u\rangle|\\
\le\ &\left(8C_{ctf}(m)r^{-2}+ (m-1)(\varepsilon r)^2\right)K^2r^{-2}
+\frac{3}{2}\|\Delta u\|_{\bar{L}^2(B(p,2r))}^2.
\end{align*}
Therefore, by assuming $C_{ctf}(m)>1$ without loss of any generality, we get
\begin{align}
\fint_{B(p,r)}|Hess_{u}|\ \le\ 4mC_{ctf}(m)Kr^{-2} +2\|\Delta
u\|_{\bar{L}^2(B(p,2r))}.
\end{align}

Applying this estimate to (\ref{eqn: integral_derivative_lb1}) and integrating
in $t$ we immediately see
\begin{align*}
&\int_{\Phi^{-1}(\mathcal{R})}
\left(\left|\nabla^Tu\right|^2|J_k(\nabla \Phi)|\right)(\gamma_x(t))\
\dvol_g(x)-\int_{\Phi^{-1}(\mathcal{R})} \left|\nabla^Tu\right|^2(x)|J_k(\nabla
\Phi)|(x)\ \dvol_g(x)\\
 \ge\quad
&-16tmk^2C_{ctf}(m)(1+\Psi)^k |B(p,2r)| K^3r^{-4}-4t(1+\Psi)^k\|\Delta
u\|_{\bar{L}^2(B(p,2r)}|B(p,2r)|K^2r^{-2},
\end{align*}
and applying this inequality to (\ref{eqn: integral_variation_u}) and
integrating in $t$ we have
\begin{align*}
&\int_{\Phi^{-1}(\mathcal{R})}\left(u|J_k(\nabla \Phi)|\right)(\gamma_x(t))\
\dvol_g(x)-\int_{\Phi^{-1}(\mathcal{R})}u(x)|J_k(\nabla \Phi)|(x)\ \dvol_g(x)\\
\ge\quad &t\int_{\Phi^{-1}(\mathcal{R})}\left|\nabla^Tu\right|^2(x)
|J_k(\nabla \Phi)|(x)\ \dvol_g(x)-tk^2(1+\Psi)^{k-1}|B(p,2r)|\Psi K^2r^{-2}\\
&-8t^2mk^2C_{ctf}(m)(1+\Psi)^k|B(p,r)| K^3 r^{-4}-2t^2(1+\Psi)^k\|\Delta
u\|_{\bar{L}^2(B(p,2r)}|B(p,2r)|K^2r^{-2}.
\end{align*} 
These estimates should be compared with (\ref{eqn: integral_derivative_lb3}) and
(\ref{eqn: change_u_lb}). Now we bring in (\ref{eqn: change_integral_ub}), the
consequence of the smallness of the $\Phi$-fibers, to see that for any $t>0$,
\begin{align*}
&\int_{\Phi^{-1}(\mathcal{R})}\left|\nabla^Tu\right|^2|J_k(\nabla \Phi)|\
\dvol_g\\ 
\le\quad &4kt^{-1}(1+\Psi)^k|B(p,2r)|K\varepsilon
+k^2(1+\Psi)^{k-1}|B(p,2r)|\Psi K^2r^{-2}\\
&+8tmk^2C_{ctf}(m)(1+\Psi)^k|B(p,2r)|
K^3 r^{-4}+2t(1+\Psi)^k\|\Delta u\|_{\bar{L}^2(B(p,2r)}|B(p,2r)|K^2r^{-2}\\
\le\quad &16mk^2C_{ctf}(m)\left(t^{-1}\varepsilon+\Psi Kr^{-2}+tK^2r^{-4}
+t\|\Delta u\|_{\bar{L}^2(B(p,2r)}Kr^{-2}\right)(1+\Psi)^{k-1}|B(p,2r)|K.
\end{align*}
Now setting $t=\sqrt{\varepsilon}K^{-1}r^2$ we have
\begin{align*}
&\int_{\Phi^{-1}(\mathcal{R})}\left|\nabla^Tu\right|^2|J_k(\nabla \Phi)|\
\dvol_g\\  \le\quad  &16mk^2C_{ctf}(m)(2\sqrt{\varepsilon}+\Psi)
(1+\Psi_1)^{k-1}|B(p,2r)|K^2r^{-2}\\
&+16mk^2C_{ctf}(m)\sqrt{\varepsilon}\|\Delta u\|_{\bar{L}^2(B(p,2r)}
(1+\Psi)^{k-1}|B(p,2r)|K,
\end{align*}
and dividing by $|B(p,r)|$ on both sides we get, by (\ref{eqn: full_measure})
and the uniform boundedness of $|\nabla u|$ and $|J_k(\nabla \Phi)|$, that
\begin{align*}
r^2\fint_{B(p,r)}\left|\nabla^T u\right|^2|J_k(\nabla \Phi)|\ \dvol_g\ \le\
C_2(m,k) \left(\varepsilon^{\frac{1}{2}}+\Psi(\varepsilon, l^{-1}|m)\right)
\left(K^2 +\|\Delta u\|_{\bar{L}^2(B(p,2r))}Kr^2\right),
\end{align*}
where
\begin{align*}
C_2(m,k)\ :=\ \left(48mk^2C_{ctf}(m)2^k\right) \sup_{\varepsilon \in
(0,\varepsilon(m))}
\frac{\Lambda_{m,-\varepsilon^2}(2)}{\Lambda_{m,-\varepsilon^2}(1)}.
\end{align*}
is clearly independent of $\varepsilon \in (0,\varepsilon(m))$ and $r\in (0,1)$.
\end{proof}

With Theorem~\ref{thm: Ricci_tangential_L2}, we could now prove our main theorem
by invoking the Cheng-Yau gradient estimate in \cite{ChengYau75}:
\begin{proof}[Proof of Theorem~\ref{thm: main}] Since $(M,g)$ is Ricci flat, the
metric $g$ is analytic. Then the conditions of Theorem~\ref{thm:
Ricci_tangential_L2} are fulfilled. Now if $u\in C^{\infty}(B(p,2r))$ further
satisfies the eigenfunction equation $\Delta u=\theta u$, then by \cite[Theorem
6]{ChengYau75}, we could estimate, for some positive constant
$C_{CY}(m,\theta)$, that
\begin{align*}
\sup_{B(p,r)}r|\nabla u|\ \le\ C_{CY}(m,\theta)\sup_{B(p,2r)}|u|.
\end{align*}
Plugging this estimate in (\ref{eqn: u_C1_bound}) we get
$K=(1+C_{CY}(m,\theta))\|u\|_{L^{\infty}(B(p,2r))}$, whence the desired
estimate (\ref{eqn: main}) with $C(m,k,\theta)=C_2(m,k)(1+C_{CY}(m,\theta))$.
\end{proof}

\begin{remark}
In fact, for a general function $u\in C^{\infty}(B(p,2r))$, the quantity 
$\|u\|_{L^{\infty}(B(p,2r))}$ on the right-hand side of the estimate (\ref{eqn:
main}) could be replaced by certain $L^q$-average like 
\begin{align*}
\|u\|_{\bar{L}^q(B(p,2r))}+\|\Delta u\|_{\bar{L}^q(B(p,2r))},
\end{align*}
for some $q>2m$. Once we recall, through the work of Anderson \cite{Anderson}
and more generally the work by Saloff-Coste \cite{SC92}, that the Sobolev
inequality on a Riemannian manifold with Ricci curvature lower bound involves
the correct power of the volume $|B(p,2r)|$, this bound could be obtained by
following the routine of De Georgi iteration, starting from the Weitzenb\"ock
formula applied to $u$. Here we will save the extra lines of details, as our
main concern is about the ``nice'' functions like the eigenfunctions of the
Laplace operator.
\end{remark}

\subsection*{Acknowledgement} 
We would like to thank our advisor Xiuxiong Chen for his constant support. We
also thank Yu Li, Song Sun, Bing Wang, Ruobing Zhang and Yongzhe Zhang for their
interests in this work.

\end{document}